\newtheorem{theorem}{Theorem}
\newtheorem{corollary}{Corollary}
\newtheorem{lemma}{Lemma}
\newtheorem{conjecture}{Conjecture}
\begin{document}

\title{Families of major index distributions: closed forms and unimodality}
\author{William J. Keith}
\keywords{unimodality; major index; descent; standard Young tableaux; Robinson-Schensted; Frame-Robinson-Thrall; Stanley hook formula; Schur function; principal specialization; Kirillov-Reshetikhin formula}
\subjclass[2010]{05A17, 11P83}

\begin{abstract}

Closed forms for $f_{\lambda,i} (q) := \sum_{\tau \in SYT(\lambda) : des(\tau) = i} q^{maj(\tau)}$, the distribution of the major index over standard Young tableaux of given shapes and specified number of descents, are established for a large collection of $\lambda$ and $i$.  Of particular interest is the family that gives a positive answer to a question of Sagan and collaborators. All formulas established in the paper are unimodal, most by a result of Kirillov and Reshetikhin.  Many can be identified as specializations of Schur functions via the Jacobi-Trudi identities.  If the number of arguments is sufficiently large, it is shown that any finite principal specialization of any Schur function $s_\lambda(1,q,q^2,\dots,q^{n-1})$ has a combinatorial realization as the distribution of the major index over a given set of tableaux.
\end{abstract}

\maketitle

\section{Introduction}

In this paper we establish formulas for the distribution of the major index of standard Young tableaux of fixed partition shape and number of descents, for numerous classes of each.  This work was initially motivated by a conjecture of Bruce Sagan and collaborators on the unimodality of the distribution of the major index over 321-avoiding permutations of length $n$ with fixed numbers of descents, which we also prove herein:

\begin{conjecture}\label{SagConj} \textbf{(Sagan et. al, \cite{SaganEtAl})} In the generating function $$\sum_{\sigma \in S_n(321)} q^{\text{maj} (\sigma)} t^{\text{des} (\sigma)} = \sum_{k=0}^{n-1} t^k A_{n,k}(q)$$ \noindent the polynomials $A_{n,k}(q)$ are unimodal.
\end{conjecture}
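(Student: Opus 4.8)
The plan is to pass from permutations to tableaux through the Robinson--Schensted correspondence, thereby rewriting each $A_{n,k}(q)$ as a nonnegative combination of the very polynomials $f_{\lambda,i}(q)$ whose closed forms occupy the rest of the paper. Recall that RSK sends $\sigma \in S_n$ to a pair $(P,Q)$ of standard Young tableaux of a common shape $\lambda \vdash n$, that $\sigma$ is $321$-avoiding precisely when its longest decreasing subsequence has length at most two, i.e.\ when $\lambda$ has at most two rows, and that the descent set of $\sigma$ equals that of the recording tableau $Q$. Consequently $\mathrm{maj}(\sigma) = \mathrm{maj}(Q)$ and $\mathrm{des}(\sigma) = \mathrm{des}(Q)$, and for a fixed $Q \in SYT(\lambda)$ the permutations with recording tableau $Q$ are indexed by the $f^\lambda$ choices of insertion tableau $P$, all sharing these two statistics. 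Grouping the sum by shape and recording tableau therefore gives
\[
A_{n,k}(q) = \sum_{\substack{\lambda \vdash n \\ \ell(\lambda) \le 2}} f^\lambda\, f_{\lambda,k}(q),
\]
a nonnegative integer combination of two-row distributions.

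Next I would record a symmetry shared by every term. Schützenberger evacuation is an involution on $SYT(\lambda)$ that complements the descent set, sending $\mathrm{Des}(Q) = \{d_1 < \dots < d_k\}$ to $\{n-d_k < \dots < n-d_1\}$. This preserves the number of descents $k$ and maps $\mathrm{maj}(Q) = \sum_i d_i$ to $kn - \mathrm{maj}(Q)$. Hence each $f_{\lambda,k}(q)$ is palindromic, $f_{\lambda,k}(q) = q^{kn} f_{\lambda,k}(1/q)$, and crucially its center of symmetry $kn/2$ depends only on $k$ and $n$, not on the shape $\lambda$. All the summands above are therefore symmetric about a common center.

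The remaining point is unimodality of the individual two-row distributions $f_{(n-j,j),k}(q)$, which is exactly the content supplied by the closed forms of the paper: once each such polynomial is identified (as a Gaussian polynomial, a product of them, or another expression covered by the Kirillov--Reshetikhin theorem), it is unimodal. Granting this, the conjecture follows from an elementary principle: a nonnegative linear combination of palindromic unimodal polynomials that share a single center of symmetry is again palindromic and unimodal, because each summand is weakly increasing on its coefficients of degree at most $kn/2$, so the sum is too, and symmetry handles the upper half. I expect the genuine difficulty to lie entirely in the unimodality of the building blocks: the RSK reduction and the evacuation symmetry are structural and clean, whereas proving that every two-row $f_{\lambda,k}(q)$ is unimodal requires the explicit closed forms and, through them, access to the Kirillov--Reshetikhin unimodality result. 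A secondary point to verify is that the closed forms established earlier cover the full two-row family $\lambda = (n-j,j)$ for all admissible $j$ and $k$, so that no summand is left without a unimodal representation.
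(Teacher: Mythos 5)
Your proposal is correct, and its skeleton coincides with the paper's: both pass through RSK to write $A_{n,k}(q) = \sum_{\lambda \vdash n,\ \ell(\lambda)\le 2} f^{\lambda} f_{\lambda,k}(q)$, both invoke Kirillov--Reshetikhin (or the closed forms) for unimodality of each summand, and both finish with the standard fact that a nonnegative sum of symmetric unimodal polynomials with a common center is unimodal. The one genuine difference is how the common center is established. The paper reads the central degree $nk/2$ off the explicit first form of $f_{(n-k,k),i}$ in Theorem \ref{MainThm}, so its cocentricity step leans on the closed formula; you instead derive the palindromicity $f_{\lambda,k}(q) = q^{kn} f_{\lambda,k}(1/q)$ from Sch\"utzenberger evacuation's descent-complementation property $\mathrm{Des}(e(Q)) = \{\,n-d : d \in \mathrm{Des}(Q)\,\}$, which is formula-free and works uniformly for every shape $\lambda \vdash n$. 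This is a real gain: the paper itself remarks that the conjecture follows from Kirillov--Reshetikhin ``without the formula,'' with cocentricity being the only missing ingredient, and your evacuation argument supplies exactly that ingredient structurally, making the proof of Conjecture \ref{SagConj} entirely independent of Theorem \ref{MainThm}. (One could alternatively extract the common center from equation (\ref{cendegree}) in the Kirillov--Reshetikhin setup, which shows the central degree of $f^k_\lambda$ is $\vert\lambda\vert k/2$ regardless of shape; your route and that one both avoid the closed form, while the paper's choice costs nothing since it proves the formula anyway.) Your closing worry about coverage is also easily dispatched: Kirillov--Reshetikhin applies to all partition shapes, so no two-row summand is left without a unimodal representation even where the closed forms degenerate.
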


In a previous paper \cite{Keith1} the author established a formula for these polynomials, after summing with the Robinson-Schensted-Knuth correspondence:

\begin{theorem}\label{MainThm} The generating function for the major index of standard Young tableaux of shape $(n-k,k)$ with $i$ descents is \begin{align*}f_{(n-k,k),i}(q) &= \frac{q^{k+i^2-i}(1-q^{n-2k+1})}{1-q^i} \left[ { {k-1} \atop {i-1} } \right]_q \left[ { {n-k} \atop {i-1} } \right]_q \\ &= q^{i^2} \left( \left[ { {n-k} \atop {i} } \right]_q \left[ { {k} \atop {i} } \right]_q - \left[ { {n-k+1} \atop {i} } \right]_q \left[ { {k-1} \atop {i} } \right]_q  \right).\end{align*}
\end{theorem}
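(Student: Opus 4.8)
The plan is to pass from tableaux to lattice paths and reduce the statement to a $q$-analogue of the ballot problem. A standard Young tableau of shape $(n-k,k)$ is recorded by the word $w = w_1 \cdots w_n \in \{U,D\}^n$, where $w_j = U$ if $j$ lies in the first row and $w_j = D$ if it lies in the second; column-strictness is exactly the ballot condition that every prefix of $w$ has at least as many $U$'s as $D$'s, and $w$ carries $n-k$ letters $U$ and $k$ letters $D$. Under this encoding $j$ is a descent of $\tau$ precisely when $w_j = U$ and $w_{j+1} = D$, so $\mathrm{des}(\tau)$ is the number of $UD$ factors (peaks) of $w$ and $\mathrm{maj}(\tau)$ is the sum of the positions of those peaks. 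Thus $f_{(n-k,k),i}(q)$ is the generating function, by sum of peak positions, for ballot words with $n-k$ letters $U$, $k$ letters $D$, and exactly $i$ peaks.

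First I would compute the unrestricted count, dropping the ballot condition. Writing $a=n-k$ and $b=k$, I claim that the generating function for \emph{all} words with $a$ letters $U$, $b$ letters $D$, and exactly $i$ peaks, weighted by $q^{\text{sum of peak positions}}$, is $q^{i^2}\left[ { {a} \atop {i} } \right]_q\left[ { {b} \atop {i} } \right]_q$. This is the refinement of MacMahon's theorem by number of descents; summing over $i$ returns $\sum_i q^{i^2}\left[ { {a} \atop {i} } \right]_q\left[ { {b} \atop {i} } \right]_q = \left[ { {a+b} \atop {a} } \right]_q$ by the $q$-Vandermonde identity, which is a useful check. I would prove the claim by marking the $i$ peaks: the choices of the ``top'' row and ``bottom'' row of the peaks contribute the two $q$-binomials, the configuration placing the $i$ peaks as early as possible accounts for the prefactor $q^{i^2}$, and the remaining letters are interleaved freely.

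Next I would impose the ballot condition by subtracting the ``bad'' words, namely those having a prefix with more $D$'s than $U$'s, via a reflection or sign-reversing involution. The target is to show that the bad words with exactly $i$ peaks contribute $q^{i^2}\left[ { {a+1} \atop {i} } \right]_q\left[ { {b-1} \atop {i} } \right]_q$, which yields the second displayed form $q^{i^2}\left(\left[ { {a} \atop {i} } \right]_q\left[ { {b} \atop {i} } \right]_q - \left[ { {a+1} \atop {i} } \right]_q\left[ { {b-1} \atop {i} } \right]_q\right)$; this difference is a $2\times 2$ $q$-determinant of the kind produced by the Jacobi--Trudi identity and the Lindstr\"om--Gessel--Viennot lemma. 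The hard part will be that the classical $U\leftrightarrow D$ reflection of the prefix up to the first violation preserves neither the number of peaks nor the sum of peak positions, so the real work is to design (or reweight) the involution so that it matches bad words of a given peak count and major index with unrestricted words carrying one extra $U$ and one fewer $D$. I expect this to be cleanest in the Lindstr\"om--Gessel--Viennot picture, where the ballot condition becomes non-intersection of two paths and the major index appears as the natural $q$-weight, so that the determinant structure is automatic.

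Finally, the first displayed form follows from the second by a routine $q$-binomial manipulation: factoring out $\left[ { {a} \atop {i-1} } \right]_q\left[ { {b-1} \atop {i-1} } \right]_q/(1-q^i)^2$ reduces the difference of products to $(1-q^i)\,q^{b-i}(1-q^{a-b+1})$, the two highest-degree terms cancelling, which gives exactly $\frac{q^{k+i^2-i}(1-q^{n-2k+1})}{1-q^i}\left[ { {k-1} \atop {i-1} } \right]_q\left[ { {n-k} \atop {i-1} } \right]_q$ upon restoring $a=n-k$ and $b=k$.
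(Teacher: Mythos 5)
Your reduction to ballot words is faithful: descents of a two-row standard Young tableau correspond exactly to the $UD$-factors of the word, and maj is the sum of their positions. Your first claim, that the \emph{unrestricted} words with $a$ letters $U$, $b$ letters $D$ and exactly $i$ such factors have generating function $q^{i^2} \left[ {a \atop i} \right]_q \left[ {b \atop i} \right]_q$, is a classical MacMahon--Carlitz-type result, and your run-length sketch can be made rigorous (the minimal configuration $(UD)^i$ has maj $1+3+\cdots+(2i-1)=i^2$, and the excess decomposes into a pair of partitions fitting in an $i\times(a-i)$ and an $i\times(b-i)$ box). The closing algebra is also correct: the difference of products does collapse to $q^{b-i}(1-q^i)(1-q^{a-b+1})\left[ {a \atop {i-1}} \right]_q \left[ {{b-1} \atop {i-1}} \right]_q$ divided by $(1-q^i)^2$, recovering the first displayed form.

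The genuine gap is the step you yourself label ``the real work'': the assertion that the words violating the ballot condition, with exactly $i$ peaks, have generating function $q^{i^2}\left[ {{a+1} \atop i} \right]_q \left[ {{b-1} \atop i} \right]_q$ is never proved, and it is equivalent to the theorem itself, not a lemma en route to it. Your fallback --- that the determinant structure should be ``automatic'' in the Lindstr\"om--Gessel--Viennot picture --- does not hold up: LGV produces determinants for weights that factor over the individual paths (as the principal specialization weight of a semistandard tableau does), whereas maj is a statistic of the \emph{shuffle}; the positions of the $UD$-factors depend on how the two rows interleave in the reading order $1,\dots,n$, and this does not decompose into independent weights on two noncrossing paths. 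Designing a reflection or involution compatible with (des, maj) is precisely the content of results such as Krattenthaler--Mohanty \cite{Kratt}, i.e., a theorem-sized task rather than a finishing step. For comparison, the paper takes an entirely different route: Theorem \ref{MainThm} is quoted from the author's earlier paper \cite{Keith1}, and the technique this paper uses for the generalization (Theorem \ref{SkewThm}, whose $j=0$ case is exactly the second displayed form) is a recurrence obtained by deleting the largest entry or the terminal strip of the tableau, followed by induction with standard $q$-binomial summation identities. If you supply the statistic-preserving correspondence for bad words, you will have an attractive bijective proof; as written, the central identity is conjectured, not demonstrated.
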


The second clause does not appear in that paper but its equivalence is a simple algebraic manipulation.  In that form it also appears in Barnabei et al. \cite{Barn2}, as an immediate consequence of Corollary 15 via the Robinson-Schensted-Knuth correspondence.

Establishing that this formula has a combinatorial interpretation in terms of a permutation statistic also establishes the positivity of the quotient in the first clause, and of the difference in the second.

Unimodality for this and the other families of polynomials discussed in this paper can be established in two ways: first as a result of Kirillov and Reshetikhin \cite{KR}, and second by identifying these families of polynomials as principal specializations of Schur functions.

Kirillov and Reshetikhin's result shows that the generating function of the major index of tableaux of a fixed shape and number of descents is a unimodal polynomial.  Sagan et al.'s conjecture immediately follows \emph{without} the formula -- the only additional work required is demonstrating that polynomials for all tableaux of shape $(n-k,k)$ with $n$ fixed and $k$ varying yield polynomials of the same central degree.

That families of Young tableaux of specified shape and number of descents should have such tidy formulas describing the distribution of their major index is of interest independent of Sagan et al.'s conjecture, and the main purpose of this paper is to establish several of these.  Many authors (\cite{AdinRoich}, \cite{Barnabei}, \cite{Barn2}, \cite{Butler}, \cite{EC2}, \cite{GoodOHaraStanton}, \cite{Kratt}) have done previous work on related classes of tableaux or related formulas.

The most important result of this type is Stanley's formula for the distribution of the major index of all standard Young tableaux of a given shape, without regard to number of descents \cite[Corollary 7.21.5]{EC2}: for $\lambda \vdash n$, 

\begin{equation}\label{StanFRT} \sum_{\tau \in SYT(\lambda)} = q^{\sum (i-1)\lambda_i)} \frac{(q)_n}{\prod (1-q^{h_{ij}})} \end{equation}

\noindent where the final product runs over all hooks in $\lambda$.  Adin and Roichmann \cite[Proposition 10.15]{AdinRoich} give a closed form for two-rowed tableaux, $$\sum_{\tau \in SYT((n-k,k))} q^{maj(\tau)} = \left[ {n \atop k} \right]_q - \left[ {n \atop {k-1}} \right]_q.$$ a formula also studied by Andrews \cite{Andrews}, and Reiner and Stanton \cite{ReinerStanton}.

These authors, prior to Adin and Roichmann's result, observed that the difference of $q$-binomial coefficients was not obviously nonnegative, but appeared to be positive and unimodal.  Andrews established positivity, and Reiner and Stanton established unimodality.

The theorems of this paper refine these theorems by number of descents, and produce further formulas, for selected families of shape $\lambda$.  Polynomiality and positivity of the formulas described, which are not always obviously nonnegative or polynomials, are thus given by the establishment of this combinatorial description, and unimodality by invoking the result of Kirillov and Reshetikhin.

In many of these cases, we can establish unimodality by a second means of independent interest, identifying these distributions as a power of $q$ times the principal specializations of particular Schur polynomials.  This is done by showing the polynomials to be particular instances of the Jacobi-Trudi identity.  Because it is known that a principal specialization of a Schur polynomial is unimodal, this establishes unimodality independently of the Kirillov/Reshetikhin result.

The main theorems of this paper are as follows.  In all cases $$f_{\lambda \setminus \mu,i} := \sum_{{\tau \in SYT(\lambda \setminus \mu)} \atop {des(\tau) = i}} q^{maj(\tau)}.$$

Established in an earlier paper of the author's \cite{Keith1} were the following formulas, the first of which we restate for completeness:

\begin{theorem}\label{PrevRes} 
\begin{align*}
f_{(n,k),i} &= \frac{q^{k+i^2-i}(1-q^{n-k+1})}{1-q^i}\left[ { {n} \atop {i-1} } \right]_q \left[ { {k-1} \atop {i-1} } \right]_q \\
 &= q^{i^2} \left( \left[ { {n} \atop {i} } \right]_q \left[ { {k} \atop {i} } \right]_q - \left[ { {n+1} \atop {i} } \right]_q \left[ { {k-1} \atop {i} } \right]_q  \right) \\
f_{(n,k,1),i} &= q^{k+i^2-2i+2} \frac{(1-q^{n-k+1})(1-q^{i-1})}{(1-q^i)(1-q)}\left[{k \atop {i-1}} \right]_q \left[ {{n+1} \atop {i-1}} \right]_q
\end{align*}
\end{theorem}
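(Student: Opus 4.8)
The first identity, for $f_{(n,k),i}$, is exactly Theorem~\ref{MainThm} after the reparametrization that records total size as $n+k$ rather than as a single symbol, so I take it as given and concentrate on a method that also delivers the three-row formula. The device I would use throughout is the passage from a tableau to its \emph{row word}: for $\tau \in SYT(\lambda)$ with $\lambda$ having at most $\ell$ rows and $|\lambda| = N$, let $r(\tau) = (r_1,\dots,r_N) \in \{1,\dots,\ell\}^N$ record the row of each entry. Column-strictness is equivalent to the ballot condition that every prefix of $r(\tau)$ holds at least as many $j$'s as $(j{+}1)$'s for each $j$, with the letters $1,\dots,\ell$ occurring with multiplicities $\lambda_1,\dots,\lambda_\ell$; a descent at $m$ is precisely a strict ascent $r_m < r_{m+1}$, so $\mathrm{des}(\tau)$ counts the ascents of $r(\tau)$ and $\mathrm{maj}(\tau) = \sum_{r_m < r_{m+1}} m$. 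For $\lambda=(n,k)$ this makes $r(\tau)$ a binary ballot word with exactly $i$ maximal runs of $2$'s; writing the run lengths as compositions of $k$ and of $n$, the descent positions are linear in the run lengths, the unconstrained $q^{\mathrm{maj}}$-sum factors into two $q$-binomial generating functions, and the ballot condition is imposed by a reflection (equivalently a $2\times 2$ Lindstr\"om--Gessel--Viennot determinant) whose second term is the content-$(n{+}1,k{-}1)$ product. This reproduces both the quotient and the difference forms, their equivalence being the algebraic identity already noted after Theorem~\ref{MainThm}.

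For the three-row shape $(n,k,1)$ the same dictionary applies, with $r(\tau)$ a word in $\{1,2,3\}$ containing a single $3$ and subject to the two chain inequalities on prefixes. Here I would enforce the ballot condition through the dual of this combinatorics, namely the Jacobi--Trudi expansion $s_{(n,k,1)} = \det\bigl(h_{\lambda_i - i + j}\bigr)_{1 \le i,j \le 3}$, a $3\times 3$ determinant in complete homogeneous symmetric functions. Under the principal specialization $x_j = q^{\,j}$ one has $h_r(1,q,\dots,q^{m-1}) = \left[ { {m+r-1} \atop {r} } \right]_q$ and $e_r(1,q,\dots,q^{m-1}) = q^{\binom{r}{2}} \left[ { {m} \atop {r} } \right]_q$, so each finite specialization $s_{(n,k,1)}(1,q,\dots,q^{m-1})$ is an explicit $3\times 3$ determinant of $q$-binomials. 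The number of descents is carried by the standard $t$-graded specialization: $\sum_{m \ge 0} s_\lambda(1,q,\dots,q^{m-1})\, t^m$ is a rational function whose numerator is $\sum_{\tau} q^{\mathrm{maj}(\tau)} t^{\mathrm{des}(\tau)+1}$ over $SYT(\lambda)$ and whose denominator is $\prod_{j=0}^{N}(1 - t q^{\,j})$. Resumming the determinant of $q$-binomials against $t^m$ and extracting the coefficient of $t^{\,i+1}$ in the numerator then isolates $f_{(n,k,1),i}$ as an explicit combination of products of $q$-binomial coefficients.

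The main obstacle is the final simplification: the expanded $3\times 3$ determinant presents $f_{(n,k,1),i}$ as a signed sum (morally the inclusion--exclusion over the $S_3$ reflections that enforce the two ballot inequalities), whereas the target is a \emph{single} positive product, with no surviving difference of the two-row type. I would exploit that one path endpoint is forced, since row $3$ has a lone cell: this should collapse the determinant to effectively $2\times 2$ size once the third path is pinned, kill the terms in which the reflected third endpoint leaves the admissible region, and pair the rest so that a $q$-Vandermonde (Pfaff--Saalsch\"utz type) identity telescopes them into $q^{k+i^2-2i+2}\frac{(1-q^{n-k+1})(1-q^{i-1})}{(1-q^i)(1-q)}\left[{ {k} \atop {i-1} }\right]_q \left[{ {n+1} \atop {i-1} }\right]_q$. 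Finally I would verify the degenerate ranges directly---small $i$, where the factor $\frac{1-q^{i-1}}{1-q}$ or an upper $q$-binomial argument degenerates, and the boundary $k=1$---and check the overall normalization against Stanley's hook-content specialization~\eqref{StanFRT}, which $\sum_i f_{(n,k,1),i}$ must recover.
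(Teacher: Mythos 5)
Your treatment of the two-row clause is fine: $f_{(n,k),i}$ is indeed Theorem~\ref{MainThm} under the reparametrization $n \mapsto n+k$, and this is essentially all the paper itself does --- Theorem~\ref{PrevRes} is not proved in this paper but cited to \cite{Keith1}, with only the equivalence of the quotient and difference forms noted as elementary algebra. So the substance of your proposal is the three-row formula $f_{(n,k,1),i}$, and that is where there is a genuine gap: you set up a framework and then leave the decisive step as a hope. Your framework itself is sound --- the graded identity $\sum_{m \ge 0} s_\lambda(1,q,\dots,q^{m-1})\,t^m = \bigl(\sum_i f_{\lambda,i}\,t^{i+1}\bigr)/\prod_{j=0}^{N}(1-tq^j)$ is correct, and Jacobi--Trudi does make each $s_{(n,k,1)}(1,q,\dots,q^{m-1})$ a $3\times 3$ determinant of $q$-binomials. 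But inverting it gives $f_{(n,k,1),i} = \sum_{r=0}^{i+1} (-1)^r q^{\binom{r}{2}} \left[ { {n+k+2} \atop {r} } \right]_q s_{(n,k,1)}(1,q,\dots,q^{i-r})$, i.e.\ roughly $6(i+2)$ signed products of four $q$-binomials that must be reduced to a single positive product. Your sketch addresses only the inner, determinantal layer of this (``should collapse,'' ``pair the rest so that a $q$-Vandermonde \dots telescopes''), exhibits neither the pairing nor the summation, and never touches the outer alternating sum over $r$ coming from the denominator product at all. That reduction is the entire content of the theorem; asserting that it ought to happen is not a proof.

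Moreover, there is concrete reason to doubt it collapses in the clean pinned-path/LGV fashion you describe: the paper explicitly remarks that $f_{(n,k,1),i}$ does not appear to be an instance of any Jacobi--Trudi determinant (which is precisely why it must invoke Kirillov--Reshetikhin for unimodality of this formula), so the determinantal structure you lean on does not survive into the answer, and the target $q^{k+i^2-2i+2} \frac{(1-q^{n-k+1})(1-q^{i-1})}{(1-q^i)(1-q)}\left[ { {k} \atop {i-1} } \right]_q \left[ { {n+1} \atop {i-1} } \right]_q$, with its non-polynomial ratio $(1-q^{n-k+1})/(1-q^i)$, is not of the pure $q$-binomial-product form that a chain of $q$-Vandermonde summations naturally terminates in. If you want a self-contained proof in the spirit of this paper's actual methods (Theorems~\ref{MK2Thm} and~\ref{N33}), the finishable route is a removal recurrence: classify tableaux of shape $(n,k,1)$ by the row containing the largest entry, obtain a recurrence expressing $f_{(n,k,1),i}$ through $f_{(n-1,k,1),i}$, $f_{(n,k-1,1),i-1}$, and $f_{(n,k),i-1}$ (with the appropriate $q$-power shifts), substitute the conjectured closed forms, and verify the resulting polynomial identity by induction.
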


It does not appear that $f_{(n,k,1),i}$ is another instance of a Jacobi-Trudi determinant, so the Kirillov/Reshetikhin result is the only route at present by which we can establish the unimodality of the product described above.

Presented at the Joint Meetings of the AMS and the MAA in 2018 were the two additional formulas below. The first, Theorem \ref{SkewThm}, generalizes Theorem \ref{PrevRes} to skew tableaux; the version presented here is corrected.  The full proofs are given herein.

\begin{theorem}\label{SkewThm} For $n \geq k > 0$, $j < n$, $i \geq 1$,
$$f_{(n,k) \setminus (j),i}(q) = q^{i^2} \left( \left[ { {n-j} \atop {i} } \right]_q \left[ { {k} \atop {i} } \right]_q - \left[ { {n+1} \atop {i} } \right]_q \left[ { {k-j-1} \atop {i} } \right]_q \right).$$
\end{theorem}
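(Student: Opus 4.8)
The plan is to pass from tableaux to lattice paths. A standard Young tableau of the two-row skew shape $(n,k)\setminus(j)$ is determined by recording, for each entry $1,\dots,N$ with $N=(n-j)+k$, whether it lies in the first or second row; this produces a word $w\in\{1,2\}^N$ with $n-j$ ones and $k$ twos. Reading off the definitions, $m$ is a descent of $\tau$ exactly when $w_m=1$ and $w_{m+1}=2$, so $des(\tau)$ is the number of factors $12$ in $w$ and $maj(\tau)$ is the sum of the positions of the first letter of each such factor. The column-strictness of the skew shape (the row-$2$ cell in column $j+r$ must exceed the row-$1$ cell in that column, for $1\le r\le k-j$) translates, after a short check, into the bounded ballot condition that every prefix of $w$ contain at most $j$ more twos than ones. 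I will call words violating this condition \emph{bad}, and the strategy is to count all words with $i$ factors $12$ (giving the first term) and subtract the bad ones (giving the second term).

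For the unconstrained count I would write a word with exactly $i$ factors $12$ in its block form $2^{a_0}1^{b_1}2^{a_1}\cdots 1^{b_i}2^{a_i}1^{b_{i+1}}$ with $a_1,\dots,a_i,b_1,\dots,b_i\ge 1$ and $a_0,b_{i+1}\ge 0$, subject to $\sum a_s=k$ and $\sum b_s=n-j$. A direct computation shows the $m$-th descent sits at position $\sum_{s<m}a_s+\sum_{s\le m}b_s$, so that $maj$ is a fixed linear form in the $a_s$ and $b_s$ whose coefficients are $i,i-1,\dots$ on the two block families separately. The generating function therefore factors into a ``two-block'' sum and a ``one-block'' sum, each of which is a box-partition generating function evaluating to a single Gaussian binomial. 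After shifting the variables constrained to be $\ge 1$ down by one, these contribute powers $q^{\binom{i}{2}}$ and $q^{\binom{i+1}{2}}$ together with the binomials $\left[ { {k} \atop {i} } \right]_q$ and $\left[ { {n-j} \atop {i} } \right]_q$; since $\binom{i}{2}+\binom{i+1}{2}=i^2$ this yields exactly $q^{i^2}\left[ { {n-j} \atop {i} } \right]_q\left[ { {k} \atop {i} } \right]_q$, the first term of the theorem.

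It remains to show that the bad words with $i$ factors $12$, weighted by $q^{maj}$, sum to $q^{i^2}\left[ { {n+1} \atop {i} } \right]_q\left[ { {k-j-1} \atop {i} } \right]_q$. Encoding $w$ as a path of height $\#1-\#2$, a word is bad exactly when the path reaches height $-(j+1)$; reflecting the portion after the first such passage across that line is a bijection onto all words of content $(k-j-1,n+1)$, where the endpoint computation uses $n\ge k$ to guarantee that every target path must cross the line. The unconstrained generating function for that content, by the previous paragraph, is precisely the desired $q^{i^2}\left[ { {n+1} \atop {i} } \right]_q\left[ { {k-j-1} \atop {i} } \right]_q$, so the proof reduces to matching the statistics across the reflection.

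This last matching is where the real work lies, and the naive reflection does not suffice: flipping the tail interchanges the two letters and so preserves neither the number of factors $12$ nor the sum of their positions. The hard part will be to promote the reflection to a genuinely $(des,maj)$-preserving bijection between bad words of content $(n-j,k)$ and all words of content $(k-j-1,n+1)$ --- equivalently, to evaluate the bad-word generating function by a maj-compatible (sign-reversing or jeu-de-taquin-style) involution. Small cases confirm the identity and suggest that the bijection acts by converting a single, canonically chosen $1$ into a $2$ while fixing the descent set; making this choice uniform and verifying that it respects $maj$ in general is the crux. As an independent check and an alternate route, I note that the claimed formula is $q^{i^2}$ times the $2\times 2$ determinant $\left[ { {n-j} \atop {i} } \right]_q\left[ { {k} \atop {i} } \right]_q-\left[ { {n+1} \atop {i} } \right]_q\left[ { {k-j-1} \atop {i} } \right]_q$ of Gaussian binomials, which invites a $q$-weighted Lindstr\"om--Gessel--Viennot argument with two non-intersecting paths and dovetails with the Jacobi--Trudi viewpoint emphasized elsewhere in the paper.
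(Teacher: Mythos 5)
Your reduction to words and your evaluation of the unconstrained sum are correct: encoding each entry by its row gives words in $\{1,2\}^{(n-j)+k}$ with $n-j$ ones and $k$ twos, descents of the tableau are exactly the factors $12$, the skew column condition is the bounded ballot condition, and your block decomposition with the shifts $\binom{i}{2}+\binom{i+1}{2}=i^2$ does yield $q^{i^2}\left[ { {n-j} \atop {i} } \right]_q \left[ { {k} \atop {i} } \right]_q$ for the free count. But the argument has a genuine gap exactly where you say ``the real work lies'': you never establish that the bad words contribute $q^{i^2}\left[ { {n+1} \atop {i} } \right]_q \left[ { {k-j-1} \atop {i} } \right]_q$. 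The classical reflection across height $-(j+1)$ handles only the content count; as you note yourself, flipping the tail after the first passage exchanges the letters there, so every factor $12$ in the tail becomes $21$ and vice versa --- neither $des$ nor $maj$ survives, and the ``convert one canonically chosen $1$ into a $2$'' map is only conjectured, not constructed or verified. Since the second term is precisely what distinguishes the skew constraint from the free count, the theorem is unproved as written. A maj-refined reflection of the kind you need does exist in the literature --- it is essentially the subject of the Krattenthaler--Mohanty paper cited as \cite{Kratt}, which counts lattice paths under a boundary jointly by descents and major index --- so the route is likely salvageable, but you would have to either import such a result and check it specializes to this identity, or actually build the $(des,maj)$-preserving bijection you sketch.

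For comparison, the paper proceeds quite differently and entirely by elementary means: it stratifies the skew tableaux by the value $r$ of the first entry in the top row (entries $1,\dots,r-1$ beginning the bottom row), reduces each stratum to the $r=1$ case at the cost of a factor $q^{i(r-1)}$ and a shift of parameters, proves the $r=1$ formula by induction on a recurrence obtained by deleting the final box or the final run of boxes ending the second row, using the Pascal and column-summation identities for $q$-binomial coefficients, and then reassembles the sum over $r$ with a summation lemma (Lemma \ref{SkewSum}). That trades your structural bijection for routine but fully verifiable $q$-binomial manipulation. Your approach, if the refined reflection were completed, would be more conceptual --- it explains the determinant-like difference and dovetails with the Lindstr\"om--Gessel--Viennot/Jacobi--Trudi viewpoint --- but at present its crucial step is missing.
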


\begin{theorem}\label{MK2Thm} For $n \geq k \geq 2$, $i \geq 2$, 
$$f_{(n,k,2),i}(q) = q^{k+i^2-3i+6} \frac{(1-q^{n-k+1})(1-q^{k-1})(1-q^n)}{(1-q^{i-1})(1-q)(1-q^2)} \left[ { {n+1} \atop {i-2} } \right]_q \left[ { {k} \atop {i-2} } \right]_q.$$
\end{theorem}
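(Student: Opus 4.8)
The plan is to pass from tableaux to lattice words, read off the major index as a linear statistic, and then evaluate the resulting constrained $q$-sum. A standard Young tableau $\tau$ of shape $(n,k,2)$ is encoded by the word $w = w_1 \cdots w_N \in \{1,2,3\}^N$, $N = n+k+2$, where $w_j$ is the index of the row containing the entry $j$. Such words are exactly the lattice (ballot) words of content $(n,k,2)$: every prefix satisfies $\#1 \ge \#2 \ge \#3$. Under this encoding $j$ is a descent of $\tau$ precisely when $w_{j+1} > w_j$, so $\mathrm{des}(\tau)$ is the number of strict ascents of $w$ and $\mathrm{maj}(\tau) = \sum_{j:\, w_j < w_{j+1}} j$. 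Thus $f_{(n,k,2),i}(q)$ is the sum of $q^{(\text{sum of ascent positions})}$ over all lattice words of content $(n,k,2)$ having exactly $i$ ascents.

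Because the third row has only two cells, $w$ contains exactly two $3$'s, and the whole problem is a controlled perturbation of the two-row case already handled in Theorem \ref{PrevRes}. First I would delete the two $3$'s to obtain a word $u \in \{1,2\}^{n+k}$ of content $(n,k)$ — automatically ballot, since deleting $3$'s preserves $\#1 \ge \#2$ at every prefix — whose ascents are exactly the $1\!\to\!2$ transitions and which, in block form $1^{a_0}2^{b_1}1^{a_1}\cdots 2^{b_{i'}}1^{a_{i'}}$, realizes the two-row distribution. Reinserting the two $3$'s at positions obeying $\#3 \le \#2$ (the first $3$ after at least one $2$, the second after at least two $2$'s) changes the ascent count and shifts ascent positions in a finite number of local patterns, according to whether each $3$ lands inside an existing $1\!\to\!2$ ascent, between equal or descending letters, at the very end of the word, or immediately adjacent to the other $3$. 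For each pattern one records the change $i - i'$ in the number of descents and writes $\mathrm{maj}$ as an explicit linear form in the gap variables $a_l, b_l$ together with the two insertion offsets.

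The remaining work is to sum $q^{\mathrm{maj}}$ over all gap compositions and admissible insertion offsets. Summing the geometric-type series in the offsets and in the $a_l, b_l$ produces $q$-binomial coefficients in the usual way, while the constraints $\#1 \ge \#2$ and $\#3 \le \#2$ cut the summation region down from a product of simplices; I would handle these by the reflection/inclusion--exclusion device that already turns the two-row count into a difference of products, and which here is responsible for the numerator factors $(1-q^{n-k+1})$, $(1-q^{k-1})$ and $(1-q^n)$. Assembling the contributions of the finitely many insertion patterns, together with the known closed form for $f_{(n,k),i'}$, should collapse the whole expression to the single product in the statement. The main obstacle is precisely this last collapse: individually the patterns yield a sum of several products of $q$-binomials, and showing that the ballot corrections recombine into one clean product (rather than a determinant-like difference, as in the two-row shape) is the delicate step, requiring care with the boundary patterns — the two $3$'s adjacent, or a $3$ at the end of $w$ — and with the degenerate ranges $k=2$ and $n=k$. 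Once the identity is established, unimodality is immediate from Kirillov--Reshetikhin \cite{KR}.
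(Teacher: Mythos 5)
Your setup is sound: the row-word encoding of $SYT((n,k,2))$ as ballot words in $\{1,2,3\}$, the identification of descents with strict ascents of the word, and the delete-the-two-3's / reinsert framework are all correct. But the proposal stops exactly where the proof has to happen. The entire content of the theorem is the claim that, after enumerating the insertion patterns, summing the constrained geometric series over the gap variables and offsets, and applying inclusion--exclusion for the ballot conditions, everything collapses to the single product $q^{k+i^2-3i+6}\,\frac{(1-q^{n-k+1})(1-q^{k-1})(1-q^n)}{(1-q^{i-1})(1-q)(1-q^2)} \left[ { {n+1} \atop {i-2} } \right]_q \left[ { {k} \atop {i-2} } \right]_q$. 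You explicitly defer this ("should collapse \dots the main obstacle is precisely this last collapse"), so no identity is actually established. Moreover, your plan as stated underestimates two difficulties. First, the maj-change under insertion depends on how many ascents lie to the right of each insertion point, so you cannot cite the closed form for $f_{(n,k),i'}$ as a black box; you must redo the full gap-variable summation of the two-row case with the insertion data woven in, which is a substantially larger computation. Second, the ballot constraint $\#3\le\#2$ coupled with $\#2\le\#1$ is a two-path non-crossing condition; the natural output of reflection/inclusion--exclusion there is a Lindstr\"om--Gessel--Viennot-type determinant (a signed sum of products), and proving that this signed sum equals one clean product is itself a nontrivial $q$-identity --- precisely the kind of verification you have not supplied.

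For comparison, the paper proves the theorem by induction: it classifies tableaux by the row containing the largest entry $n+k+2$, obtaining a recurrence expressing $f_{(n,k,2),i}$ in terms of $f_{(n-1,k,2),i}$, $f_{(n-1,k-1,2),i-1}$, $f_{(n,k-1,2),i}$, $f_{(n-1,k-1,2),i}$, the known formulas for $f_{(n,k,1),\cdot}$ and $f_{(n,k),\cdot}$ from Theorem \ref{PrevRes}, together with base cases ($i\le 1$, $k=1$, $n=k-1$, and the direct verification $f_{(n,2,2),2}=q^6\left[ {n \atop 2} \right]_q$); substituting the claimed formula into the recurrence reduces the theorem to a single polynomial identity, which is then verified by expansion (with a symbolic algebra package). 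Your route is genuinely different and could in principle be completed, but completing it means carrying out the pattern enumeration and the constrained summation in full and then proving the resulting collapse --- an algebraic verification at least as heavy as the one the paper performs. As it stands, the proposal is a strategy, not a proof.
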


Again calculation suggests that Theorem \ref{MK2Thm} is not an instance of a Schur function specialization, but Kirillov and Reshetikhin suffices to guarantee unimodality.

Theorem \ref{SkewThm} is not covered by the result of Kirillov and Reshetikhin, which applies to standard tableaux of partition shape; however, as a consequence of the concentricity of the formulas in Theorem \ref{PrevRes} necessary to show Sagan et al.'s conjecture, we will be able to establish the unimodality of this formula, yielding the following new combinatorial theorem.

\begin{theorem}\label{SkewMaj} The distribution of the major index over all skew two-rowed standard Young tableaux with fixed number of descents is a unimodal polynomial.
\end{theorem}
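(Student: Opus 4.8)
The plan is to realize each skew distribution as a telescoping sum of \emph{ordinary} two-rowed distributions sharing a common number of cells, and then to exploit the concentricity and unimodality already available for those. Write $P(a,b) := \left[ {a \atop i} \right]_q \left[ {b \atop i} \right]_q$ for the fixed value of $i$ at hand, so that Theorem~\ref{SkewThm} reads $f_{(n,k)\setminus(j),i} = q^{i^2}\bigl(P(n-j,k) - P(n+1,k-j-1)\bigr)$ while the first clause of Theorem~\ref{PrevRes} reads $f_{(a,b),i} = q^{i^2}\bigl(P(a,b) - P(a+1,b-1)\bigr)$. First I would record the telescoping identity
\[ f_{(n,k)\setminus(j),i} \;=\; \sum_{s=0}^{j} f_{(n-j+s,\,k-s),\,i}, \]
which is immediate from these two forms, since $\sum_{s=0}^{j}\bigl(P(n-j+s,k-s)-P(n-j+s+1,k-s-1)\bigr)$ telescopes to $P(n-j,k)-P(n+1,k-j-1)$. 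Every shape on the right has $n+k-j =: M$ cells, and each summand is a genuine \emph{partition}-shape distribution exactly when its first part dominates its second, i.e. when $n-j+s \geq k-s$; this holds for all $s\in\{0,\dots,j\}$ precisely when $n-j\geq k$.

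To guarantee this favorable regime I would first reduce to the case $n-j\geq k$. The closed form of Theorem~\ref{SkewThm} is invariant under the substitution $(n,k,j)\mapsto(n,\,n-j,\,n-k)$: the term $P(n-j,k)$ is symmetric in its two arguments and is thereby fixed, while $n+1$ and $k-j-1$ are fixed outright. Combinatorially this substitution is the effect on the closed form of rotating the skew shape $(n,k)\setminus(j)$ by $180^\circ$, an operation that preserves the number of descents and sends $\mathrm{maj}$ to $iM-\mathrm{maj}$ and hence preserves the distribution; but for the proof only the algebraic invariance is needed. Since the substitution interchanges the regimes $n-j\geq k$ and $n-j<k$, we may assume $n-j\geq k$ without loss of generality, so that all $j+1$ summands above are honest two-rowed partition distributions (some possibly zero, which is harmless).

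It then remains to combine them. Each $f_{(n-j+s,k-s),i}$ is unimodal by Kirillov and Reshetikhin, and each is \emph{symmetric} about the common center $iM/2$: both $P$-products in $f_{(a,b),i}$ have degree $i(a+b-2i)=i(M-2i)$ and are symmetric about $\tfrac12 i(M-2i)$, so their difference is symmetric about that point and the prefactor $q^{i^2}$ shifts the center to $iM/2$, a quantity independent of how $M$ is split into $a+b$. This concentricity is exactly the phenomenon underlying Sagan et al.'s conjecture. Because a finite sum of polynomials that are each symmetric and unimodal about one and the same center is again symmetric and unimodal --- on the coefficients weakly below the center every summand is nondecreasing, so their sum is too, and symmetry about $iM/2$ does the rest --- applying this to the telescoping sum yields the unimodality of $f_{(n,k)\setminus(j),i}$.

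The main obstacle is the validity-of-shapes issue flagged in the first paragraph: the telescoping is an algebraic identity for all admissible $(n,k,j)$, but when $n-j<k$ its leading summands are differences of $q$-binomial products that are not themselves tableau distributions and may even fail to be nonnegative, so Kirillov--Reshetikhin cannot be invoked term by term. The invariance reduction is what disposes of this, and the one step requiring genuine care is confirming that the substitution $(n,k,j)\mapsto(n,n-j,n-k)$ really is a symmetry of the closed form (equivalently, that $180^\circ$ rotation of the skew shape preserves the descent number and reflects the major index); everything else --- the telescoping algebra, the degree count establishing concentricity, and the additive closure of concentric symmetric unimodal polynomials --- is routine.
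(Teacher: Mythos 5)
Your proof is correct, and its engine is the same as the paper's: the identical telescoping of the closed form of Theorem \ref{SkewThm} into the two-rowed summands $f_{(n-j+s,k-s),i}$, $0\leq s\leq j$, all of size $M=n+k-j$, followed by the identical observation that these summands are nonnegative, symmetric, unimodal, and concentric at degree $iM/2$ --- the same concentricity underlying Conjecture \ref{SagConj} --- so that their sum is unimodal. The one genuine difference is your reduction to the regime $n-j\geq k$ via the invariance of the closed form under $(n,k,j)\mapsto(n,n-j,n-k)$ (algebraically a two-line check, combinatorially the $180^{\circ}$ rotation of the skew shape). The paper's proof contains no such step: it simply asserts that every telescoping summand is the major-index distribution of a partition shape, which fails when $n-j<k$. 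And the failure is not cosmetic: for $(5,5)\setminus(3)$ with $i=1$, the telescoping sum contains the term
$$q\left( \left[ { {2} \atop {1} } \right]_q \left[ { {5} \atop {1} } \right]_q - \left[ { {3} \atop {1} } \right]_q \left[ { {4} \atop {1} } \right]_q \right) = -q^3-q^4,$$
which is negative, hence is not a tableau distribution, and neither Kirillov--Reshetikhin unimodality nor nonnegativity can be invoked for it termwise. Your rotation/invariance reduction is exactly what is needed to dispose of that regime, so on this point your writeup is more careful than the paper's own proof of Theorem \ref{SkewMaj}; everything else in your argument (the telescoping algebra, the degree count, the closure of concentric symmetric unimodal polynomials under addition) matches the paper step for step.
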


Presented at the conference Combinatory Analysis 2018 was the case $\lambda = (\lambda_1, \lambda_2, \lambda_3)$ of the following general theorem.

\begin{theorem}\label{BigSchur} Given $\lambda = (\lambda_1, \dots , \lambda_r) \vdash n$, let $\alpha = \alpha(\lambda) = (\lambda_2,\dots,\lambda_r)$.  Then $$f_{\lambda, n-\lambda_1} = q^{\binom{n-\lambda_1+1}{2}}s_{\alpha^\prime}(1,q,\dots,q^{\lambda_1-1}).$$
\end{theorem}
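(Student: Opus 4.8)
The plan is to construct a weight-preserving bijection between the tableaux counted by $f_{\lambda,n-\lambda_1}$ and the semistandard tableaux of shape $\alpha'$ with entries in $\{1,\dots,\lambda_1\}$, which are exactly the objects enumerated by the specialization. Writing $\Sigma(U)$ for the sum of the entries of $U$, one has $s_{\alpha'}(1,q,\dots,q^{\lambda_1-1})=\sum_U q^{\Sigma(U)-|\alpha|}$, the sum over all $U\in SYT$-analogues (semistandard $U$ of shape $\alpha'$), so everything reduces to matching this to the major-index generating function.

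First I would pin down the meaning of $\mathrm{des}(\tau)=n-\lambda_1$: it is the \emph{maximal} descent number on $SYT(\lambda)$. If $b\ge 2$ is any entry of the first row, then $b$ lies in the top row and is therefore weakly above $b-1$, so $b-1$ is an ascent; this exhibits $\lambda_1-1$ distinct forced ascents, whence $\mathrm{des}(\tau)\le (n-1)-(\lambda_1-1)=n-\lambda_1$ for every $\tau$. Equality forces these to be the only ascents, so $\tau$ attains $n-\lambda_1$ descents if and only if $\mathrm{Des}(\tau)=\{v-1:\ v\ \text{not in row }1\}$. In particular the descent set is determined by the first row, and $\mathrm{maj}(\tau)=\sum_{v\notin\text{row }1}(v-1)$.

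Next, to each entry $v$ outside the first row I attach $e(v):=|\{\,b\in\text{row }1:\ b<v\,\}|\in\{1,\dots,\lambda_1\}$ and record $e(v)$ in the corresponding cell of rows $2,\dots,r$, producing a filling $E$ of shape $\alpha$. Writing $v-1=e(v)+|\{u\notin\text{row }1:\ u<v\}|$ and summing over $v$ gives $\mathrm{maj}(\tau)=\binom{|\alpha|}{2}+\sum_{v}e(v)$. I then claim $E$ is strictly increasing along rows and weakly increasing down columns. Weak monotonicity down columns is immediate since $e$ is a nondecreasing function of $v$ and columns of the subtableau increase; strictness along rows uses the maximal-descent structure, for two equal-$e$ entries $v<v'$ in one row would force all of $v,v+1,\dots,v'$ to lie outside the first row, hence (each being a descent) to occupy strictly descending rows, contradicting their sharing a row. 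Transposing $E$ yields a semistandard tableau $U$ of shape $\alpha'$ with entries in $\{1,\dots,\lambda_1\}$, and by construction $\Sigma(U)=\sum_v e(v)$, so $q^{\binom{|\alpha|+1}{2}}q^{\Sigma(U)-|\alpha|}=q^{\binom{|\alpha|}{2}+\Sigma(U)}=q^{\mathrm{maj}(\tau)}$, using $\binom{|\alpha|+1}{2}-|\alpha|=\binom{|\alpha|}{2}$.

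Finally I would show $\tau\mapsto U$ is a bijection by reconstructing $\tau$ from $U$. The content of $U$ determines the first-row set $B$ outright: if $m_j$ is the number of entries equal to $j$, then $b_1=1$ and $b_{j+1}=b_j+m_j+1$, and the off-row values in each gap $(b_j,b_{j+1})$ are the consecutive integers $b_j+1,\dots,b_j+m_j$; within each such run the maximal-descent condition forces these into strictly descending rows, which is precisely the data of the equal-$e$ cells of $E$, so $\tau$ is recovered uniquely. I expect the main obstacle to lie exactly here: one must verify that the filling reconstructed from an \emph{arbitrary} semistandard $U$ is always a legitimate standard Young tableau of shape $\lambda$ (column-strict internally and across rows $1$ and $2$) that realizes the maximal descent number, rather than merely a consistent labeling; this check, not the weight bookkeeping, is the technical heart. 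Granting it, summing $q^{\mathrm{maj}(\tau)}$ over the maximal-descent tableaux gives $q^{\binom{n-\lambda_1+1}{2}}\sum_U q^{\Sigma(U)-|\alpha|}=q^{\binom{n-\lambda_1+1}{2}}s_{\alpha'}(1,q,\dots,q^{\lambda_1-1})$, as claimed.
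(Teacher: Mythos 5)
Your argument is essentially correct, but it is a genuinely different proof from the one in the paper. The paper proceeds by induction on $\lambda$: classifying how the largest entry $n$ terminates a string of consecutive entries running up through a set $S$ of rows to the last cell of the first row, it derives the recurrence
$$f_{\lambda,n-\lambda_1} = \sum_{S \subseteq \{ 2,\dots, r \}} q^{\vert S \vert n - \binom{\vert S \vert + 1}{2}}\, f_{\lambda^{\downarrow (S \cup \{1\})},\,n-\lambda_1-\vert S \vert},$$
where $\lambda^{\downarrow T}$ removes one box from each row indexed by $T$, and then verifies that the dual Jacobi--Trudi determinant expressing $q^{\binom{n-\lambda_1+1}{2}}s_{\alpha^\prime}(1,q,\dots,q^{\lambda_1-1})$ satisfies the same recurrence, by pairing the summands $S$ and $S \cup \{r\}$ and collapsing each pair with the $q$-Pascal identity, one row index at a time. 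You never touch a determinant: you expand $s_{\alpha^\prime}$ by its definition as a sum over semistandard tableaux and exhibit a weight-preserving bijection $\tau \mapsto U$. Your route explains the identity combinatorially and makes the paper's Corollary transparent, since each monomial of the principal specialization literally \emph{is} a maximal-descent tableau; the paper's route is less illuminating here, but it is the same recurrence-plus-polynomial-identity scheme that the paper reuses for shapes where no bijection is apparent (Theorems \ref{MK2Thm} and \ref{N33}). Your preliminary steps (the characterization of maximal descent number, the weight computation $maj(\tau)=\binom{n-\lambda_1}{2}+\sum_v e(v)$, and the semistandardness of $E$) are all correct.

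The one step you defer --- that reconstruction from an \emph{arbitrary} semistandard $U$ yields a legitimate SYT of shape $\lambda$ with $n-\lambda_1$ descents --- is indeed all that is missing, and it goes through without difficulty, so the gap is one of completeness rather than substance. Let $E$ be the transpose of $U$ (strictly increasing rows, weakly increasing columns, entries in $\{1,\dots,\lambda_1\}$), let $m_j$ be the number of entries equal to $j$, and set $b_1=1$, $b_{j+1}=b_j+m_j+1$; note $b_{\lambda_1}+m_{\lambda_1}=\lambda_1+\vert\alpha\vert=n$, so the values $1,\dots,n$ are used exactly once. Row-strictness of $E$ places the cells with entry $j$ in distinct rows; fill them top to bottom with $b_j+1,\dots,b_j+m_j$, and let row $1$ be $b_1<\dots<b_{\lambda_1}$. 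Rows of the resulting filling increase because every value of run $j$ is smaller than every value of run $j^\prime$ when $j<j^\prime$; columns inside rows $2,\dots,r$ increase because columns of $E$ weakly increase and equal entries are filled in increasing order downward; columns between rows $1$ and $2$ increase because the entry of $E$ in cell $(1,c)$ is at least $c$ (it is the $c$-th term of a strictly increasing sequence of positive integers), so the value placed there exceeds $b_c$. Finally, for every $v$ outside row $1$, either $v-1$ equals some $b_j$ and so lies in row $1$, strictly above $v$, or $v-1$ is the preceding member of the same run and so lies in a strictly higher row by construction; hence every such $v-1$ is a descent and the descent number is the maximal $n-\lambda_1$. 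This reconstruction inverts your map, so the bijection, and with it your proof, is complete.
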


This has an interesting corollary giving a combinatorial interpretation for any Schur function's principal specialization, as long as the number of $q$-power arguments is sufficiently large.

\begin{corollary} For any partition $\beta$, if $k \geq {\beta^\prime}_1$, that is, if $k$ is at least the number of parts in $\beta$, then the principal specialization $s_\beta(1,q,\dots,q^{k-1})$ of the Schur function indexed by $\beta$ is, up to shift by a power of $q$, the distribution of the major index of all standard Young tableaux of shape $\lambda = (k, {\beta^\prime}_1,{\beta^\prime}_2,\dots)$ with the maximum possible number of descents.
\end{corollary}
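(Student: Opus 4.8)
The plan is to deduce the corollary directly from Theorem \ref{BigSchur} by reading that theorem's conclusion backwards: given $\beta$, I would choose the shape $\lambda$ so that the Schur function on the right-hand side is exactly $s_\beta$. Since Theorem \ref{BigSchur} produces $s_{\alpha^\prime}$ with $\alpha = (\lambda_2,\dots,\lambda_r)$, I want $\alpha^\prime = \beta$, equivalently $\alpha = \beta^\prime$. This forces $(\lambda_2,\lambda_3,\dots) = (\beta^\prime_1,\beta^\prime_2,\dots)$, and to match the number of specialization arguments, $\lambda_1 - 1 = k-1$, I set $\lambda_1 = k$. The natural choice is therefore $\lambda = (k,\beta^\prime_1,\beta^\prime_2,\dots)$, precisely the shape named in the statement.

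First I would verify that this $\lambda$ is a genuine partition. The tail $\beta^\prime_1 \geq \beta^\prime_2 \geq \cdots$ is weakly decreasing because $\beta^\prime$ is itself a partition, so the only new inequality to check is $\lambda_1 \geq \lambda_2$, i.e.\ $k \geq \beta^\prime_1$; this is exactly the hypothesis, and it is the only place the hypothesis enters. Writing $n = |\lambda| = k + |\beta^\prime| = k + |\beta|$ since conjugation preserves size, Theorem \ref{BigSchur} applied with $\alpha = \beta^\prime$ and $\alpha^\prime = (\beta^\prime)^\prime = \beta$ then yields
\[
f_{\lambda,\,n-k} = q^{\binom{n-k+1}{2}}\, s_\beta(1,q,\dots,q^{k-1}),
\]
so that $s_\beta(1,q,\dots,q^{k-1})$ is, up to the indicated power of $q$, the distribution $f_{\lambda,\,n-k}$ of the major index over all $\tau \in SYT(\lambda)$ with exactly $n-k$ descents.

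It remains to justify the phrase ``maximum possible number of descents,'' that is, to show $n - \lambda_1 = n-k$ is the largest value $\mathrm{des}(\tau)$ attains on $SYT(\lambda)$; this is the one step that is not pure bookkeeping. I would argue via the row-sequence $r_1,\dots,r_n$ recording the row of each entry $1,\dots,n$: a descent at $j$ means $r_{j+1} > r_j$, so $\mathrm{des}(\tau)$ counts the strict ascents of this sequence. Partitioning the sequence into maximal strictly increasing blocks, each row-label $i$ (which occurs $\lambda_i$ times) appears at most once per block, so the number of blocks is at least $\max_i \lambda_i = \lambda_1$; since the non-ascents are exactly the block boundaries, there are at least $\lambda_1 - 1$ of them, giving $\mathrm{des}(\tau) \leq n - \lambda_1$. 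For the matching lower bound I would exhibit the column-superstandard tableau, filling cells column by column (top to bottom, then left to right) with $1,2,\dots,n$: every within-column step is a descent while only the $\lambda_1 - 1$ column-to-column steps are not, realizing exactly $\sum_c (\lambda^\prime_c - 1) = n - \lambda_1$ descents. The main obstacle is thus not the algebra but this extremal claim; everything else is substitution into Theorem \ref{BigSchur} together with the involutivity of conjugation.
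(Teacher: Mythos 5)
Your proposal is correct and takes essentially the same route the paper intends: the corollary follows by direct substitution of $\lambda = (k,\beta'_1,\beta'_2,\dots)$ into Theorem \ref{BigSchur}, with $\alpha = \beta'$ so that $\alpha' = \beta$, the hypothesis $k \geq \beta'_1$ entering only to guarantee $\lambda$ is a partition. Your additional verification that $n-\lambda_1$ really is the maximum number of descents (the block-decomposition upper bound and the column-by-column filling achieving it) addresses a fact the paper asserts without proof inside its argument for Theorem \ref{BigSchur}, so it is a welcome supplement rather than a deviation.
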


In Section \ref{DefSec} we give all the definitions necessary for the paper and the known results we will need.  In Section \ref{FormSec} we confirm Sagan et al.'s conjecture, and prove the theorems listed above, as well as a few further families.  In Section \ref{FutureSec} we discuss possible future directions of the project and the challenges associated with further progress.

\section{Definitions and Background}\label{DefSec}

A \emph{partition} of $n$ is a nonincreasing sequence $\lambda = (\lambda_1,\lambda_2,\dots)$ of nonnegative integers that sums to $n$.  We denote $\vert \lambda \vert = n$ or $\lambda \vdash n$.  The $\lambda_i$ are the parts of $\lambda$.  It will be convenient in this paper to regard partitions as being infinite sequences, of which necessarily only finitely many entries are nonzero.  Occasionally we use the notation $\lambda = (a_1^{b_1},a_2^{b_2},\dots)$ to mean the partition with $b_1$ parts of size $a_1$, $b_2$ parts of size $a_2$, etc.  

A \emph{composition} is a sequence of nonnegative integers that sums to $n$; it need not be nonincreasing.  If for two partitions $\lambda$ and $\mu$ we have $\mu_i \leq \lambda_i$ for all $i$, then the \emph{skew partition} $\lambda \setminus \mu$ partitions $\vert \lambda \vert - \vert \mu \vert$, and is best described with its Ferrers diagram.

The \emph{Ferrers diagram} of $\lambda = (\lambda_1,\dots) \vdash n$ is an array of $n$ unit squares in the fourth quadrant, justified to the origin, wherein a box exists with bottom right corner $(-i,-j)$ if $\lambda_ i \geq j$.  The \emph{conjugate} of a partition $\lambda$, denoted $\lambda^{\prime} = (\lambda_1^{\prime},\dots)$, is the partition with Ferrers diagram that of $\lambda$, reflected across the line $y=-x$ in the plane.  The \emph{hooklength} of the box with bottom right corner at $(-i,-j)$, denoted $h_{ij}$, is $\lambda_j - i + \lambda_i^{\prime} - j + 1$; it equals the number of boxes directly right of or below the box at $(-i,-j)$, plus 1 for itself.  The Ferrers diagram of the skew partition $\lambda \setminus \mu$ is the set of boxes in the Ferrers diagram of $\lambda$ but not in the Ferrers diagram of $\mu$.

\begin{figure}[h]
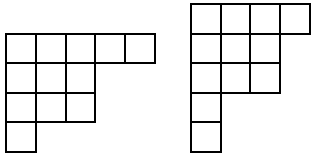

$\young(\hfil\hfil\hfil\hfil\hfil,\hfil\hfil\hfil::,\hfil\hfil\hfil::,\hfil::::) \, \quad \, \young(\hfil\hfil\hfil\hfil,\hfil\hfil\hfil:,\hfil\hfil\hfil:,\hfil:::,\hfil:::)$
\caption{The Ferrers diagrams of, left, $\lambda = (5,3,3,1)$ and, right, its conjugate $(4,3,3,1,1)$.}
\end{figure}

\begin{figure}[h]
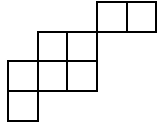

$\young(:::\hfil\hfil,:\hfil\hfil::,\hfil\hfil\hfil::,\hfil::::)$
\caption{The Ferrers diagram of $\lambda \setminus \mu = (5,3,3,1) \setminus (3,1)$.}
\end{figure}

A \emph{standard Young tableau} of shape $\lambda \vdash n$ is a filling of the Ferrers diagram of $\lambda$ with the numbers 1 through $n$ such that rows increase left to right and columns increase top to bottom.  The set of these is denoted $SYT(\lambda)$.  If the entry $i$ is in a higher row than the entry $i+1$, we say that the tableau has a \emph{descent} at place $i$; we say $i$ is the descent top and $i+1$ is the descent bottom.  The set of descents of a tableau $\tau$ is $Des(\tau)$.  Tableaux possess several useful combinatorial statistics, among them the \emph{descent number} $des(\tau) = \vert Des(\tau) \vert$ and the \emph{major index} $maj(\tau) = \sum_{i \in Des(\tau)} i$.

\begin{figure}[h]
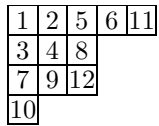

$\young(1256{{11}},348,79{{12}},{{10}})$
\caption{An element $\tau$ of $SYT((5,3,3,1))$ with $des(\tau) = 5$, $maj(\tau) = 36$.}
\end{figure}

The \emph{skew standard Young tableaux} are defined similarly, and the descent and major index statistics remain well-defined.  Note that even if 1 is not in the top row of a skew tableau, it does not constitute the bottom of a descent.

A \emph{semistandard Young tableaux} may repeat entries.  Entries must strictly increase from top to bottom down columns, but need only weakly increase from left to right.  The set of these of shape $\lambda$ is denoted $SSYT(\lambda)$.

\begin{figure}[h]
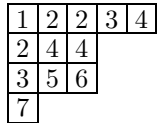

$\young(1223{{4}},244::,35{{6}}::,{{7}}:::)$
\caption{An element $\tau$ of $SSYT((5,3,3,1))$.}
\end{figure}

The number of standard Young tableaux of shape $\lambda$ is given by the Frame-Robinson-Thrall hooklength formula $$g^{\lambda} = \frac{n!}{\prod h_{ij}}$$ where the product runs over all boxes in the diagram.

The Stanley formula of equation \ref{StanFRT} for non-skew tableaux is thus a $q$-analogue of this number.  In part of a very recent paper \cite[Section 9]{MPP}, Morales, Pak and Panova survey the known counting formulas for skew tableaux.  The present Theorem \ref{SkewMaj} is a $q$-counting in the two-row case for skew tableaux.

A polynomial $p(q) = p_0 + p_1 q + \dots + p_d q^d$ is \emph{unimodal} if there is some $a$ such that $$p_0 \leq p_1 \leq \dots \leq p_a \geq p_{a+1} \geq \dots \geq p_d.$$  It is \emph{symmetric} if, when $j$ is the minimal degree such that $p_j \neq 0$, it holds that $p_j = p_d$, $p_{j+1} = p_{d-1}$, etc.  In such a case the polynomial has \emph{central degree} $(j+d)/2$, or Zeilberger prefers the term \emph{darga} of $j+d$.

When the coefficients of such a polynomial are nonnegative integers, a classical method to prove unimodality is to construct partially ordered sets of combinatorial objects in which the populations of the ranks are counted by the $p_i$, and construct an injection from less-populated to more-populated ranks.  The coefficient of $q^n$ in the $q$-binomial $\left[ { {M+N} \atop {N}} \right]_q$, defined by $$(q)_m = (1-q)(1-q^2)\dots(1-q^m) \quad , \quad \left[ {{M+N} \atop {N}} \right]_q = \frac{(q)_{M+N}}{(q)_M(q)_N}, $$ \noindent counts the number of partitions of $n$ with at most $M$ positive parts, each of size at most $N$, or partitions in the $M \times N$ box.  When $M < N$ we take the convention that the value of $\left[ { {M} \atop {N} } \right]_q$ is 0.  If $M \geq 0$, then $\left[ { {M} \atop {0} } \right]_q = 1$.  

The unimodality of the $q$-binomial coefficient was proven first by P. A. MacMahon with algebraic methods and later in a famous result by constructing the combinatorial injection by Kathleen O'Hara \cite{OHara}.  O'Hara's method was to show that the $q$-binomial coefficient could be expressed as a sum of smaller $q$-binomial coefficients.  Denote by $\lambda \vdash n$ that $\lambda = (\lambda_1,\lambda_2,\dots)$ is a partition of $n$, and let $Y_i = \sum_{j=1}^i \lambda_j$, $Y_0 = 0$.  Then her structure theorem can be rendered, following Zeilberger \cite{Zeil}, as

\begin{theorem} \textbf{The KOH Theorem:} $$\left[ {{n+a} \atop {n}} \right]_q = \sum_{\lambda \vdash n} q^{\sum_{i=1}^\infty \lambda_i^2 - \lambda_i}\prod_{j \geq 1} \left[ {{j(a+2) -Y_{j-1}-Y_{j+1}} \atop {\lambda_j - \lambda_{j-1}}} \right]_q.$$
\end{theorem}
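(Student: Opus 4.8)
The plan is to prove the identity bijectively, following in spirit O'Hara's original argument and keeping Zeilberger's inductive verification in reserve as a cross-check. Recall from the definitions above that $\left[ {{n+a} \atop {n}} \right]_q$ is the generating function, by size, for the partitions $\pi$ that fit inside the box with at most $a$ parts, each of size at most $n$. The right-hand side is a sum over ``core shapes'' $\lambda \vdash n$, so the goal is to attach to each such $\pi$ a unique core $\lambda$ together with a tuple of residual partitions indexed by the part-sizes $j \geq 1$, where the residual attached to $j$ is confined to the box of height $m_j := \lambda_j - \lambda_{j+1}$ (the number of columns of $\lambda$ of length exactly $j$) and width $(a+2)j - Y_{j-1} - Y_{j+1} - m_j$. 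Matching sizes term by term then yields the claim.

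First I would make this decomposition explicit, peeling $\pi$ into a canonically nested stack of rectangular layers whose heights are recorded by $\lambda$, so that the staircase offsets forced by the nesting contribute a fixed minimal area. I would then verify by direct computation that the minimal partition carrying a prescribed core $\lambda$, namely the one in which every residual partition is empty, has size exactly $\sum_i (\lambda_i^2 - \lambda_i)$, matching the prefactor $q^{\sum_i \lambda_i^2 - \lambda_i}$; summation by parts rewrites this quantity in terms of the $m_j$ and the partial sums $Y_j$, which is also what makes the box dimensions fall out. Next I would show that, with the core fixed, the remaining freedom in $\pi$ splits independently across the part-sizes: the $m_j$ layers of height $j$ may slide horizontally within the room left by their taller and shorter neighbors, and the total available width is precisely $(a+2)j - Y_{j-1} - Y_{j+1} - m_j$, producing the factor $\left[ {{(a+2)j - Y_{j-1} - Y_{j+1}} \atop {\lambda_j - \lambda_{j+1}}} \right]_q$. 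I would compute the cases $n = 1$ and $n = 2$ first, since they already pin down how $Y_{j-1}$ and $Y_{j+1}$ enter the width and serve as a sanity check on the whole scheme.

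The main obstacle, and the crux of O'Hara's theorem, is the accounting in this last step: one must prove simultaneously that the peeling map is a genuine bijection (uniqueness of both the core and the residual tuple) and that the horizontal room available to the height-$j$ layers is exactly $(a+2)j - Y_{j-1} - Y_{j+1} - m_j$, not merely bounded by it. The appearance of both $Y_{j-1}$ and $Y_{j+1}$ signals that the slack at level $j$ interacts with the layers immediately above and below at once, so the count is genuinely global and must be organized to avoid double-counting the shared boundaries between successive layers. If the direct bijection proves too delicate to verify cleanly, I would retreat to the inductive route: show that both sides obey the same $q$-Pascal recurrence, for instance $\left[ {{n+a} \atop {n}} \right]_q = \left[ {{n+a-1} \atop {n}} \right]_q + q^{a} \left[ {{n+a-1} \atop {n-1}} \right]_q$, together with the matching initial data at $a = 0$ and $n = 0$. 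The difficulty then transfers to exhibiting a local, summand-by-summand manipulation of the sum over $\lambda$ that reproduces this recurrence, which is precisely the bookkeeping that Zeilberger carried out in rendering O'Hara's proof.
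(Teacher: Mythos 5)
The first thing to say is that the paper does not prove this statement at all: it is quoted as background, attributed to O'Hara \cite{OHara} in the rendering of Zeilberger \cite{Zeil}, so there is no internal proof to compare against and your proposal must stand on its own. As a standalone argument it has a genuine gap: both routes you describe are announced rather than executed, and the step you yourself flag as ``the main obstacle'' is not a detail to be filled in later --- it \emph{is} O'Hara's theorem. You assert that once the core $\lambda$ is fixed, the remaining freedom in $\pi$ ``splits independently across the part-sizes,'' with the height-$j$ layers sliding in a box of width $j(a+2)-Y_{j-1}-Y_{j+1}-m_j$. No such one-shot splitting is available off the shelf: the simultaneous appearance of $Y_{j-1}$ and $Y_{j+1}$ couples each level to both of its neighbors, a naive peeling of $\pi$ into nested rectangular layers does not produce residuals that vary independently, and O'Hara's actual construction is a recursive decomposition (iterating a structure lemma through a chain of smaller boxes), not a single bijection one can ``make explicit'' in a paragraph. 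Saying you would ``prove simultaneously that the peeling map is a genuine bijection and that the horizontal room is exactly'' the stated width restates the theorem as a goal; it does not identify intermediate lemmas or a construction that could be checked.

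The fallback route has the same character. The $q$-Pascal recurrence $\left[ {{n+a} \atop {n}} \right]_q = \left[ {{n+a-1} \atop {n}} \right]_q + q^{a} \left[ {{n+a-1} \atop {n-1}} \right]_q$ relates the pair $(n,a)$ to $(n,a-1)$ and $(n-1,a)$, so on the right-hand side of KOH it mixes sums over cores $\lambda \vdash n$ and $\mu \vdash n-1$ with different box parameters; there is no evident summand-by-summand matching, and the regrouping of terms is precisely the nontrivial bookkeeping you defer to Zeilberger. Acknowledging that ``the difficulty then transfers'' there concedes that neither branch of the plan is carried out. Two smaller points: your base cases $n=1,2$ are a reasonable sanity check but carry no inductive weight by themselves; and you silently replaced the printed lower index $\lambda_j - \lambda_{j-1}$ by $\lambda_j - \lambda_{j+1}$. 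The latter is in fact the correct reading (with the printed index nearly every factor would vanish, since $\lambda_j - \lambda_{j-1} \leq 0$ for $j \geq 2$), so this correction is right --- but in a proof it must be flagged and justified, not made tacitly.
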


We additionally observe that a product of unimodal symmetric polynomials is unimodal and symmetric, and that a sum of unimodal symmetric polynomials with nonnegative coefficients having the same central degree is itself symmetric and unimodal with nonnegative coefficients and the same central degree.  With a short calculation showing that every summand polynomial has the same central degree, and finally the base case that $\left[ {{M+1} \atop 1} \right]_q = 1+q +\dots + q^M$, the unimodality of the $q$-binomial coefficient was inductively proved by O'Hara's structure theorem above.

A \emph{permutation} of length $n$ is an ordered list of the numbers 1 through $n$.  If in a permutation $\sigma$ the element $i$ is followed immediately by element $j$ with $i>j$, then $\sigma$ has a \emph{descent} at place $i$.  The set of descents of the permutation is $Des(\sigma)$.  The \emph{descent number} $des(\sigma)$ of the permutation is the cardinality of this set, and the \emph{major index} $maj(\sigma)$ of the permutation is its sum.

The Robinson-Schensted-Knuth correspondence (hereinafter \emph{RSK}) gives a bijection between permutations of length $n$ and ordered pairs of standard Young tableaux of the same shape which are partitions of $n$.  This bijection has the properties that the descent set of the permutation is equal to the descent set of the second member of the tableaux pair, and that the number of rows in the image partition shapes is equal to the length of the longest decreasing (not necessarily contiguous) subsequence of the permutation.

If a permutation of length $n$ has no descending subsequence of length 3 or more, we say that the permutation is 321-\emph{avoiding}.  By the remarks above, the partition pairs corresponding to this permutation under RSK are of shapes having 1 or 2 parts.  Denote the set of such permutations by $S_n(321)$.

\subsection{The Kirillov-Reshetikhin theorem}

A \emph{semistandard Young tableau} of shape $\lambda \setminus \rho$ and content $\mu$, with $\mu = (\mu_1,\dots)$ a composition of $\vert \lambda \setminus \rho \vert$, is a filling of the Ferrers diagram of $\lambda \setminus \rho$ with $\mu_1$ ones, $\mu_2$ twos, etc., with rows nondecreasing left to right and columns strictly increasing top to bottom.  The set of semistandard Young tableaux of shape $\lambda \setminus \rho$ and content $\mu$ is denoted $SSYT(\lambda \setminus \rho,\mu)$.  The set of standard Young tableaux of shape $\lambda$ is then just $SYT(\lambda) = SSYT(\lambda,1^{\vert \lambda \vert})$. If $\mu$ is a partition, then a semistandard Young tableau of shape $\lambda$ and content $\mu$ possesses a statistic known as the \emph{charge}, the definition of which we will not need in this paper.  The polynomial $$K_{\lambda\mu}(q) = \sum_{\tau \in SSYT(\lambda,\mu)} q^{charge(\tau)}$$ is known as the \emph{Kostka polynomial}.

One tool we will use to prove unimodality of formulas in this paper is Kirillov and Reshetikhin's result that the generating function for charge over standard tableaux with a fixed number of descents is unimodal, and is the same as the generating function for major index over the same set, up to a power shift.  The presentation given here follows Goodman, O'Hara, and Stanton \cite{GoodOHaraStanton}.  If referencing \cite{KR} directly note that there $\alpha^i$ is the conjugate of $\nu^{(i)}$.

Let the generating function for charge over standard Young tableaux of shape $\lambda$ with exactly $k$ descents be denoted $$K_{\lambda,1^{\vert \lambda \vert}}^k(q) = \sum_{{\tau \in SYT(\lambda)} \atop {\vert Des(\tau) \vert = k}} q^{charge(\tau)}.$$

Let the generating function for the major index over tableaux of shape $\lambda$ with exactly $k$ descents be denoted $$f_{\lambda}^k (q) = \sum_{{\tau \in SYT(\lambda)} \atop {\vert Des(\tau) \vert = k}} q^{maj(\tau)}.$$

These two polynomials are related by \cite[equation (1.2)]{GoodOHaraStanton} $$K_{\lambda,1^{\vert \lambda \vert}}^k (q) = q^{\binom{\vert \lambda \vert}{2}} f_{\lambda}^k(q^{-1}).$$

Because both are symmetric, this means that the two polynomials are simply shifts of the other by a power of $q$, and hence if one is unimodal then the other is.

Given two partitions $\lambda$ and $\mu$ with $\vert \lambda \vert = \vert \mu \vert$, let an admissible sequence $\alpha$ be a sequence of partitions of the form $\alpha = (\alpha^0, \alpha^1, \alpha^2, \dots )$ in which $\alpha^0 = \mu^\prime$, and for $i \geq 1$, $\vert \alpha^i \vert = \sum_{j=i+1}^\infty \lambda_j$.  For any such sequence $\alpha$, define the quantity $$c(\alpha) = \sum_{a,i \geq 1} \binom{\alpha^{a-1}_i - {\alpha^a}_i}{2}.$$  For $a, i \geq 1$ and a given $\alpha$, define the function $$P_i^a (\alpha) = \sum_{j=1}^i ( {\alpha^{a-1}}_j - 2{\alpha^a}_j + {\alpha^{a+1}}_j ).$$  Then Kirillov and Reshetikhin give in \cite{KR} the following formula for $K_{\lambda, 1^{\vert \lambda \vert}}^k (q)$.  

\begin{theorem}\label{KRThm} \textbf{\cite[Theorems 4.2, 4.7 (iii)]{KR}} For a partition $\lambda$, $$K_{\lambda, 1^{\vert \lambda \vert}}^k (q) = \sum_{{\alpha = (\mu^\prime, \alpha_1,\alpha_2,\dots)} \atop {\alpha_1^1 = k}} q^{c(\alpha)} \prod_{k,i} \left[ { {P_i^a(\alpha) + {\alpha^a}_i - {\alpha^a}_{i+1}} \atop {{\alpha^a}_i - {\alpha^a}_{i+1}} } \right]_q.$$
\end{theorem}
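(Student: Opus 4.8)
Since the left-hand side $K^k_{\lambda,1^{|\lambda|}}(q)$ is by definition the charge generating function over standard tableaux of shape $\lambda$ with exactly $k$ descents, the task is to evaluate this sum and match it to the fermionic product on the right. The plan is to transport the charge statistic to the combinatorics of rigged configurations, where the product of Gaussian binomials appears transparently. Here the content is $\mu = 1^{|\lambda|}$, so its conjugate is $\alpha^0 = \mu' = (|\lambda|)$ and $SSYT(\lambda,\mu) = SYT(\lambda)$, and the goal is to evaluate $\sum_\tau q^{charge(\tau)}$ with the descent number held fixed.

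The central tool is the statistic-preserving bijection $\Phi$ between standard tableaux of shape $\lambda$ and rigged configurations of type $(\lambda,1^{|\lambda|})$. Here the underlying configuration is the sequence of partitions $(\alpha^1,\alpha^2,\dots)$ subject to the size constraints $|\alpha^a| = \sum_{j > a}\lambda_j$ imposed in the statement, and a rigging attaches an integer label to each of the $m_i^a := \alpha^a_i - \alpha^a_{i+1}$ strings of length $i$ at level $a$, each label constrained to lie between $0$ and the vacancy number, which is exactly $P_i^a(\alpha)$. The bijection $\Phi$ sends charge to the rigged-configuration statistic (the cocharge), and this statistic decomposes as the configurational part $c(\alpha)$ — built from the contributions $\binom{\alpha^{a-1}_i - \alpha^a_i}{2}$ — plus the total of all the riggings. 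Establishing $\Phi$ together with this statistic-preservation is the substance of the theorem and the step I expect to be the main obstacle.

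Granting the bijection, the rest is bookkeeping. For a fixed configuration $\alpha$, the $m_i^a$ labels on the strings of a common length $i$ at level $a$ range (up to reordering) over weakly increasing sequences $0 \le r_1 \le \dots \le r_{m_i^a} \le P_i^a(\alpha)$; the classical identity $\sum_{0 \le r_1 \le \dots \le r_m \le P} q^{r_1 + \dots + r_m} = \left[ {P+m} \atop {m} \right]_q$ collects these into exactly the Gaussian factor $\left[ {P_i^a(\alpha) + m_i^a} \atop {m_i^a} \right]_q$, while the configurational weight contributes $q^{c(\alpha)}$. Hence each admissible $\alpha$ contributes $q^{c(\alpha)}\prod_{a,i}\left[ {P_i^a(\alpha) + m_i^a} \atop {m_i^a} \right]_q$, and summing over all admissible $\alpha$ reproduces the full fermionic sum on the right.

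It remains to pin down the descent statistic. Under $\Phi$ the number of descents of a standard tableau is read off from the first rigged partition, with $des(\tau) = k$ corresponding to $\alpha^1_1 = k$; restricting the sum to configurations with $\alpha^1_1 = k$ then isolates $K^k_{\lambda,1^{|\lambda|}}(q)$. The hard part throughout is $\Phi$ itself: showing it is well defined, bijective, carries charge to the cocharge statistic, and tracks the descent count to $\alpha^1_1$ is precisely what upgrades the fermionic expression from a formal identity into a theorem, and it is where essentially all of the real work resides.
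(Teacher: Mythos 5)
A point of orientation first: the paper does not prove this statement at all. It is quoted, with attribution, as Theorems 4.2 and 4.7(iii) of Kirillov and Reshetikhin's paper \cite{KR}, and is then used downstream as a black box (together with equation (\ref{cendegree})) to conclude unimodality of the polynomials $f_\lambda^k(q)$. So there is no internal proof to compare your attempt against; the only fair benchmark is the proof in the cited literature.

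Measured against that, your proposal is a faithful outline of the known strategy --- the rigged-configuration approach --- and the bookkeeping you do carry out is correct: once one has a statistic-preserving bijection $\Phi$ from $SYT(\lambda)$ to rigged configurations, summing the riggings on the $m_i^a = {\alpha^a}_i - {\alpha^a}_{i+1}$ strings of length $i$ at level $a$ yields exactly the factor $\left[ { {P_i^a(\alpha) + m_i^a} \atop {m_i^a} } \right]_q$ by the partitions-in-a-box identity, and the configurational weight contributes $q^{c(\alpha)}$. (You also silently corrected the statement's typo $\prod_{k,i}$ to $\prod_{a,i}$, which is right.) But as a proof this has a genuine gap, and you concede it yourself: every assertion carrying actual content --- that $\Phi$ exists, is well defined, is a bijection onto admissible rigged configurations, carries charge to $c(\alpha)$ plus the total rigging, and sends the descent number to $\alpha^1_1$ --- is assumed rather than established. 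Those claims \emph{are} the Kirillov--Reshetikhin theorem; indeed, the complete proof of the bijection and its statistic preservation required substantial later work (by Kirillov, Schilling, and Shimozono) beyond the original announcement. What you have written is therefore a correct reduction of the identity to the properties of $\Phi$ --- a road map matching the standard route --- but not a proof, since the step you defer is where, as you say, essentially all of the real work resides.
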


Finally, \cite[equation (4.2)]{KR} establishes that in the case that interests us, where $\mu^\prime$ is the one-part partition $(\vert \lambda \vert)$ and all terms in the sum have $\alpha_1^1 = k$, we have that each summand is a (clearly symmetric, nonnegative, and unimodal) polynomial of the same central degree:  \begin{equation}\label{cendegree} 2c(\alpha) + \sum_{a,i \geq 1} P_i^a(\alpha)({\alpha^a}_i - {\alpha^a}_{i+1}) = 2 \binom{\vert \lambda \vert}{2} - \vert \lambda \vert k.\end{equation}

This establishes that $K_{\lambda,1^{\vert \lambda \vert}}^k (q)$ and hence $f_{\lambda}^k (q)$ are unimodal.

\subsection{Schur polynomials and principal specializations.} Among numerous equivalent expressions for the Schur polynomials, probably the simplest is the following.  Let variables $x_1,\dots,x_n$ be specified.  Given $\tau \in SSYT(\lambda \setminus \rho)$ in which $t_1$ ones appear, $t_2$ twos, $\dots$, denote $x^\tau = x_1^{t_1}x_2^{t_2}\dots x_n^{t_n}$.  Then the Schur polynomial indexed by $\lambda \setminus \rho$ is $$s_{\lambda \setminus \rho}(x_1,\dots,x_n) = \sum_{\tau \in SSYT(\lambda \setminus \rho)} x^\tau.$$

Our primary interest is in the fact that the \emph{principal specialization} $s_{\lambda} (1,q,\dots,q^n)$ is always a unimodal polynomial \cite{GoodOHaraStanton}.  Two determinantal expressions for Schur polynomials are the \emph{Jacobi-Trudi identities}, which we will employ in their specialized forms (\cite[Theoren 7.16.1 and 7.16.2]{EC2} and \cite[pp. 27 and 41]{Mac}): for $\lambda = (\lambda_1,\dots,\lambda_r)$, $\rho = (\rho_1,\dots,\rho_r)$, $0 \leq \rho_i \leq \lambda_i$,  

\begin{align*}
s_{\lambda \setminus \rho}(1,q,\dots,q^n) &= det \left( \left[ {{n-1+\lambda_i-\rho_j+i+j} \atop {n-1} } \right]_q \right)_{1\leq i,j \leq r} \\
 &= det \left( \left[ {{n} \atop {\lambda^\prime_i-\rho^\prime_j-i+j} } \right]_q q^{\binom{\lambda^\prime_i-\rho^\prime_j-i+j}{2}} \right)_{1\leq i,j \leq \lambda_1} \, .
\end{align*}

\section{Formulas}\label{FormSec}

\noindent \textbf{Proof of Conjecture \ref{SagConj}.} We begin by proving the motivating conjecture of Sagan et al.

By the RSK correspondence, we have that the generating function for the major index of all 321-avoiding permutations of length $n$ with exactly $i$ descents is \begin{equation}\label{ank} A_{n,i}(q) = \sum_{k=i}^{\lfloor \frac{n}{2} \rfloor} g^{(n-k,k)} f_{(n-k,k),i} (q) . \end{equation}

By either Kirillov and Reshetikhin or by interpretation of the second expression for $f_{(n-k,k),i}$ in Theorem \ref{MainThm} as a shift of the principal specialization of a Schur function, we know that these nonnegative polynomials are each unimodal.  By analyzing the first form of the generating function $f_{(n-k,k),i}$ given in Theorem \ref{MainThm}, we see that all of the summands in (\ref{ank}) have the same central degree $ni/2$, and hence their sum is also unimodal.

Conjecture \ref{SagConj} is thus established. \hfill $\Box$

\phantom{.}

We now consider further instances of this phenomenon.

\phantom{.}

\noindent \textbf{Skew tableaux; proof of Theorem \ref{SkewThm}.} We instead prove the identity $$f_{(n,k) \setminus (j),i}(q) = \sum_{r=1}^{j+1} q^{i^2 + i(r-1)} \left( \left[ { {n-j} \atop {i} } \right]_q \left[ { {k-r} \atop {i-1} } \right]_q - \left[ { {n-r+1} \atop {i-1} } \right]_q \left[ { {k-j-1} \atop {i} } \right]_q \right).$$

This is equivalent to the claim of the theorem after employing the $q$-binomial summation identity: 

\begin{lemma}\label{SkewSum}$$\sum_{R=0}^j q^{iR} \left[ { {A - R - 1} \atop {i-1} } \right]_q = \left[ { {A} \atop {i} } \right]_q - q^{i(j+1)} \left[ { {A - j - 1} \atop {i} } \right]_q .$$
\end{lemma}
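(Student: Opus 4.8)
The plan is to recognize the left-hand sum as a telescoping sum. The essential tool is the $q$-Pascal recurrence in the form $\left[ {n \atop i} \right]_q = \left[ {n-1 \atop i-1} \right]_q + q^i \left[ {n-1 \atop i} \right]_q$, which holds under the boundary conventions fixed in Section \ref{DefSec}. The point of choosing this particular version of the recurrence (rather than the variant carrying a factor $q^{n-i}$ on the lower-index term) is that it is exactly the one that reproduces the prefactor $q^{iR}$ appearing in the summand, so that the sum collapses.

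Concretely, I would first isolate the summand. Applying the recurrence with $n = A - R$ and solving for the $(i-1)$-term gives
$$\left[ { {A-R-1} \atop {i-1} } \right]_q = \left[ { {A-R} \atop {i} } \right]_q - q^i \left[ { {A-R-1} \atop {i} } \right]_q .$$
Multiplying through by $q^{iR}$ and setting $g(R) := q^{iR} \left[ { {A-R} \atop {i} } \right]_q$, the right-hand side becomes precisely $g(R) - g(R+1)$, since $g(R+1) = q^{i(R+1)} \left[ { {A-R-1} \atop {i} } \right]_q$. Thus the $R$th summand of the lemma equals the difference $g(R) - g(R+1)$.

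Summing over $R$ from $0$ to $j$ then telescopes, leaving $g(0) - g(j+1)$, which is exactly $\left[ { {A} \atop {i} } \right]_q - q^{i(j+1)} \left[ { {A-j-1} \atop {i} } \right]_q$, the claimed value. There is no genuine obstacle here beyond bookkeeping: the only care needed is in confirming that the recurrence and the telescoping remain valid at the boundary cases where a $q$-binomial coefficient vanishes or the upper index drops below the lower index, which is immediate from the conventions $\left[ { {M} \atop {N} } \right]_q = 0$ for $M < N$ and $\left[ { {M} \atop {0} } \right]_q = 1$ for $M \geq 0$.
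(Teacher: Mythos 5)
Your telescoping argument is correct, but it is genuinely different from the paper's proof. Your key step checks out: the $q$-Pascal recurrence $\left[ { {n} \atop {i} } \right]_q = \left[ { {n-1} \atop {i-1} } \right]_q + q^i \left[ { {n-1} \atop {i} } \right]_q$ holds under the paper's conventions in every degenerate case that can arise here (for $i \geq 1$ both sides vanish when $n < i$ and both equal $1$ when $n = i$), so each summand is indeed $g(R) - g(R+1)$ with $g(R) = q^{iR} \left[ { {A-R} \atop {i} } \right]_q$, and the sum collapses to $g(0) - g(j+1)$ as claimed. The paper instead argues combinatorially: it reads $q^{iR} \left[ { {A-R-1} \atop {i-1} } \right]_q$ as the generating function for partitions in the $(A-i) \times i$ box having exactly $R$ parts equal to $i$ (peel off the $R$ maximal parts, leaving a partition in an $(A-i-R) \times (i-1)$ box), so the left-hand side counts partitions in the box with at most $j$ parts of size $i$, while the right-hand side counts all partitions in the box minus those with at least $j+1$ parts of size $i$. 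The combinatorial route explains what each term enumerates, in keeping with the paper's theme that these polynomials are distributions over combinatorial objects, and requires no boundary-case analysis at all; your algebraic route is more mechanical, is transparently valid for all integer values of the parameters once the conventions are verified, and generalizes verbatim to any sum of this telescoping shape without needing a counting interpretation. Either proof serves the purpose for which the lemma is used.
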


\noindent \emph{Proof of Lemma \ref{SkewSum}.} Combinatorially interpret each term on the left-hand side as counting partitions in the $(A-i) \times i$ box in which exactly $R$ parts are of size exactly $i$; the sum then counts all partitions in the $(A-i) \times i$ box in which there are at most $j$ parts of size $i$. The difference on the right-hand side is precisely the count of partitions in the $(A-i) \times i$ box, less those partitions with more than $j$ parts of size $i$.  Thus the two sides are equal.\hfill $\Box$

\phantom{.}

We claim that the summand in the formula with index $r$ gives the distribution of major index for those tableaux with $i$ descents which have shape as illustrated below: the skew partition $(n,k) \setminus (j)$ in which $r$ is the first element in the top row, that is, entries 1 through $r-1$ begin the bottom row, and $r$ is in the top row.  This requires that $r$ be at minimum 1 and at maximum $j+1$.

$$\young(:::::::{r}*****,12{\dots}{r-1}*{*\,\,}****:::)$$

We begin with the $r=1$ case.  Let $f_{(n,k) \setminus (j), i}^*$ denote the distribution of the major index over skew standard Young tableaux of shape $(n,k) \setminus (j)$ with $i$ descents in which the entry 1 is in the top row. For this term the formula simplifies to \begin{equation}\label{fstar} f_{(n,k) \setminus (j), i}^* = q^{i^2} \left( \left[ { {n-j} \atop {i} } \right]_q \left[ { {k-1} \atop {i-1} } \right]_q - \left[ { {n} \atop {i-1} } \right]_q \left[ { {k-j-1} \atop {i} } \right]_q \right) .\end{equation}

We construct a recurrence for $f_{(n,k) \setminus (j), i}^*$, beginning with base case $i=1$.  In this case the first and only descent must follow the entry of value no less than $max(1,k-j)$ and no greater than $n-j$, inclusive.  Hence we have that either $n-j < 1$ or $k < 1$, in which case there are no such tableaux and the generating function is 0, or $$f_{(n,k) \setminus (j), 1}^* = q^{max(1,k-j)} + \dots + q^{n-j},$$ \noindent which in both cases is the claim of equation (\ref{fstar}).

Now let $i > 1$.  Note that this forces $n-j \geq i > 1$.  In a skew tableau of shape $(n,k) \setminus (j)$ with $i>1$ descents and 1 in the first row, the entry for box $n-j+k$ is either on the end of the first row, in which case it can be removed to leave any skew tableau of shape $(n-1,k) \setminus (j)$ with $i$ descents, or on the end of the second row following a sequence of $\ell+1$ consecutive boxes concluding with the last box of the first row containing value $n-j+k-\ell$. Removing these $\ell+1$ boxes results in a tableau of shape $(n-1,k-\ell) \setminus (j)$ with exactly $i-1$ descents.  This removal reduces the major index of this tableau by $n-j+k-\ell$.  Because the remaining tableau has at least one descent, the first entry of the top row has not been removed.  Thus for $i>1$ we have the recurrence

$$f_{(n,k) \setminus (j),i}^* = f_{(n-1,k) \setminus (j),i}^* + \sum_{\ell=1}^{k-1} q^{n-j+k-\ell} f_{(n-1,k-\ell) \setminus(j),i-1}^*. $$

We establish boundary conditions.  Observe that if $n=k$, then the term $f_{(n-1,k) \setminus (j),i}^*$ always yields 0 for any values of $j$ and $i$ since the two terms of the difference are equal, and this matches combinatorial requirement that $n \geq k$.  If $n \geq k$ but $n-j < i$ or $k<i$ then both terms yield 0, which is also correct.  Hence we may assume for induction that $n > k$ and that the formula holds true for $i-1$ descents and all values of the parameters, and for $i$ descents and smaller values of $n$ with any values of the other parameters.

We therefore substitute the claimed formula into the recurrence and sum, using the well-known $q$-binomial summation identities $\sum_{j=0}^n q^j \left[ { {m+j} \atop m} \right]_q = \left[ {{n+m+1} \atop {m+1} } \right]_q$ and $ \left[ { {A-1} \atop {B} } \right]_q +  q^{A-B} \left[ { {A-1} \atop {B-1} } \right]_q =  \left[ { {A} \atop {B} } \right]_q$:

\begin{align*}
f_{(n,k) \setminus (j),i}^* (q) &= q^{i^2} \left( \left[ { {n-1-j} \atop {i} } \right]_q \left[ { {k-1} \atop {i-1} } \right]_q - \left[ { {n-1} \atop {i-1} } \right]_q \left[ { {k-j-1} \atop {i} } \right]_q \right) \\
& + \sum_{\ell=1}^{k-1} q^{n-j+\ell} \left( q^{(i-1)^2} \left( \left[ { {n-1-j} \atop {i-1} } \right]_q \left[ { {\ell-1} \atop {i-2} } \right]_q - \left[ { {n-1} \atop {i-2} } \right]_q \left[ { {\ell-j-1} \atop {i-1} } \right]_q \right) \right) \\
&= q^{i^2} \left( \left[ { {n-1-j} \atop {i} } \right]_q \left[ { {k-1} \atop {i-1} } \right]_q - \left[ { {n-1} \atop {i-1} } \right]_q \left[ { {k-j-1} \atop {i} } \right]_q \right) \\
& + q^{n-j+i^2-i} \left[ { {n-1-j} \atop {i-1} } \right]_q \sum_{d=2-i}^{k-i} q^d  \left[ { {(i-2)+d} \atop {i-2} } \right]_q \\
& + q^{n+i^2-i+1}  \left[ { {n-1} \atop {i-2} } \right]_q \sum_{d=1-i-j}^{k-i-j+1} q^d  \left[ { {(i-1)+d} \atop {i-1} } \right]_q \\
&= q^{i^2}  \left[ { {k-1} \atop {i-1} } \right]_q \left( \left[ { {n-1-j} \atop {i} } \right]_q +  q^{n-j-i} \left[ { {n-1-j} \atop {i-1} } \right]_q \right) \\
& - q^{i^2}  \left[ { {k-j-1} \atop {i} } \right]_q \left(  \left[ { {n-1} \atop {i-1} } \right]_q + q^{n-i+1} \left[ { {n-1} \atop {i-2} } \right]_q \right) \\
&= q^{i^2} \left(  \left[ { {k-1} \atop {i-1} } \right]_q  \left[ { {n-j} \atop {i} } \right]_q -  \left[ { {k-j-1} \atop {i} } \right]_q  \left[ { {n} \atop {i-1} } \right]_q \right)
\end{align*}

This proves the formula for $f_{(n,k) \setminus (j),i}^*$.

To prove the theorem we observe that among those skew tableaux with exactly $i$ descents, where entries 1 through $r-1$ begin the second row, the resulting distribution of the major index is precisely that of those skew tableaux with $i$ descents, where 1 is in the first row, all of $n$, $k$, and $j$ are decreased by $r-1$, and the major index has $i(r-1)$ added to it.  Summing over valid $r$ yields the theorem. \hfill $\Box$

\phantom{.}

\noindent \textbf{Proof of Theorem \ref{SkewMaj}.} Although skew tableaux are not an instance of Kirillov and Reshetikhin's theorem, we can prove the unimodality of the formula based on the previously proven cocentricity of $f_{(n-k,k),i}$ for all $k$ and fixed $n$.

The formula for skew tableaux can readily be written as a telescoping sum.

\begin{multline*}
 q^{i^2} \left( \left[ { {n-j} \atop {i} } \right]_q \left[ { {k} \atop {i} } \right]_q - \left[ { {n+1} \atop {i} } \right]_q \left[ { {k-j-1} \atop {i} } \right]_q \right) = \\
  q^{i^2} \left( \left[ { {n} \atop {i} } \right]_q \left[ { {k-j} \atop {i} } \right]_q - \left[ { {n+1} \atop {i} } \right]_q \left[ { {k-j-1} \atop {i} } \right]_q \right) \\
  +  q^{i^2} \left( \left[ { {n-1} \atop {i} } \right]_q \left[ { {k-j+1} \atop {i} } \right]_q - \left[ { {n} \atop {i} } \right]_q \left[ { {k-j} \atop {i} } \right]_q \right) \\
  + \dots + q^{i^2} \left( \left[ { {n-j} \atop {i} } \right]_q \left[ { {k} \atop {i} } \right]_q - \left[ { {n-j+1} \atop {i} } \right]_q \left[ { {k-1} \atop {i} } \right]_q \right).
\end{multline*}

Each term on the right hand side is an instance of $f_{(n-j+s,k-s),i}$ for various $s$.  Since all of these are the distributions of the major index over standard Young tableaux with size $n-j+s+k-s = n-j+k$ and exactly $i$ descents for their various $s$, and hence by our earlier argument for Conjecture \ref{SagConj} are cocentric at degree $(n-j+k)i/2$, it follows that their sum is unimodal.  \hfill $\Box$

\phantom{.}

Before proving Theorem \ref{BigSchur}, we note a special case of interest in prior literature, namely the case of three-rowed tableaux which have the maximum number of descents.  For these, we have the following formula.

\begin{corollary}\label{ThreeRowFull} For $n \geq j$, $j \geq k$, $k \geq 0$, we have $$f_{(n,j,k),j+k} = q^{j^2+jk+k^2} \left( \left[ { {n} \atop {j} } \right]_q \left[ { {n} \atop {k} } \right]_q - q^{j-k+1} \left[ { {n} \atop {j+1} } \right]_q \left[ { {n} \atop {k-1} } \right]_q \right).$$
\end{corollary}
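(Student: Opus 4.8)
The plan is to obtain this as the three-row specialization of Theorem~\ref{BigSchur} and then to unwind the resulting Schur function through the dual Jacobi--Trudi identity. Taking $\lambda = (n,j,k)$ gives $|\lambda| = n+j+k$ and $\lambda_1 = n$, so the number of descents considered is $|\lambda|-\lambda_1 = j+k$ (the maximal value), and $\alpha = \alpha(\lambda) = (j,k)$. Theorem~\ref{BigSchur} then yields
$$f_{(n,j,k),\,j+k} = q^{\binom{j+k+1}{2}}\, s_{(j,k)'}(1,q,\dots,q^{n-1}),$$
so the whole problem reduces to evaluating this principal specialization in $n$ variables and collecting powers of $q$.

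First I would compute the conjugate. Since $(j,k)$ with $j \ge k$ has two rows, $(j,k)' = (2^k,1^{\,j-k})$, a partition whose largest part is $2$; this is exactly what keeps the computation short. Writing $\mu = (2^k,1^{\,j-k})$, so that $\mu' = (j,k)$ and $\mu_1 = 2$, the dual (second) Jacobi--Trudi identity expresses $s_\mu$ as a $2\times2$ determinant. Using $e_m(1,q,\dots,q^{n-1}) = q^{\binom m2}\left[ {n \atop m} \right]_q$ and the fact that the $(a,b)$ entry carries exponent $\mu'_a - a + b$ for $a,b \in \{1,2\}$, the determinant reads
$$s_\mu(1,q,\dots,q^{n-1}) = \det \begin{pmatrix} q^{\binom{j}{2}}\left[ {n \atop j} \right]_q & q^{\binom{j+1}{2}}\left[ {n \atop j+1} \right]_q \\ q^{\binom{k-1}{2}}\left[ {n \atop k-1} \right]_q & q^{\binom{k}{2}}\left[ {n \atop k} \right]_q \end{pmatrix}.$$

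Expanding this determinant and factoring $q^{\binom j2+\binom k2}$ out of both products, the cross term acquires the extra power $\bigl(\binom{j+1}{2}+\binom{k-1}{2}\bigr)-\bigl(\binom j2+\binom k2\bigr) = j-(k-1)$, which is exactly the factor $q^{\,j-k+1}$ in the statement. It then remains to merge the prefactor from Theorem~\ref{BigSchur}: the total power of $q$ standing outside the parentheses is $\binom{j+k+1}{2}+\binom j2+\binom k2$, and a one-line check gives $\binom{j+k+1}{2}+\binom j2+\binom k2 = j^2+jk+k^2$. Assembling the three pieces gives the claimed closed form.

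Granting Theorem~\ref{BigSchur}, the argument is bookkeeping rather than a real obstacle; the two points that need care are the number of specialization variables --- there are $n$ of them, $1,q,\dots,q^{n-1}$, so $e_m$ contributes $\left[ {n \atop m} \right]_q$ and not $\left[ {n+1 \atop m} \right]_q$ --- and the degenerate boundary $k=0$. In that case $\mu = (1^{\,j})$, the determinant collapses to $e_j(1,q,\dots,q^{n-1})$, and the convention $\left[ {n \atop -1} \right]_q = 0$ reduces the stated formula to $q^{j^2}\left[ {n \atop j} \right]_q$, in agreement with the $i=j$ instance of Theorem~\ref{PrevRes}.
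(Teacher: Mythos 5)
Your proposal is correct and matches the paper's route: Corollary~\ref{ThreeRowFull} is exactly the three-row specialization of Theorem~\ref{BigSchur}, with the principal specialization $s_{(2^k,1^{j-k})}(1,q,\dots,q^{n-1})$ expanded as the $2\times 2$ dual Jacobi--Trudi determinant $e_je_k - e_{j+1}e_{k-1}$ and the powers of $q$ collected just as you did. Your bookkeeping is right, including the correct prefactor $q^{\binom{j+k+1}{2}}$ from the theorem, the correct variable count giving $\left[{n \atop m}\right]_q$, and the exponent identity $\binom{j+k+1}{2}+\binom{j}{2}+\binom{k}{2}=j^2+jk+k^2$.
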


This family of formulas was studied by Butler, who showed that the coefficients were nonnegative \cite[Proposition 3.1]{Butler} by describing an injection on the inversion statistic on certain words; Stanton communicated in \cite{Butler} a proof via the Jacobi-Trudi identity, which also establishes the unimodality of the polynomial, by interpretation as a $q$-shift of the Schur function principal specialization $$q^{\binom{j+k}{2}}s_{(2^k,1^{j-k})} (1,q,\dots,q^{n-1}).$$ Unimodality also follows from Kirillov and Reshetikhin.

We now prove the more general Theorem \ref{BigSchur} of which the above result is the corollary.

\phantom{.}

\noindent \textbf{Proof of Theorem \ref{BigSchur}.} We begin by establishing boundary conditions for a recurrence.

For $\lambda = (\lambda_1,0,0,0,\dots,0)$, $\lambda_1 \geq 0$, we have one standard Young tableau with major index zero, so $f_{\lambda,0} = 1$.  The claim of the theorem is that

$$f_{\lambda,0} = q^{\binom{1}{2}} det \left( \left[ { {\lambda_1} \atop {0-i+j} } \right]_q q^{\binom{0-i+j}{2}} \right)_{2 \leq i,j \leq r} = 1$$ \noindent and so the theorem holds in this case.

The theorem holds if instead of a partition $\lambda$ we have $\beta = (\beta_1,\beta_2,\dots)$ in which some nonempty set of $\beta_i$ are equal to $\beta_{i+1} - 1$, but otherwise $\beta_i \geq \beta_{i+1}$.  If $\beta_1 = \beta_2 - 1$ then the first row of the determinant is zero.  In any other case where $\beta_i = \beta_{i+1} - 1$, the determinant is zero since two or more rows are equal.  In both cases this is correct since we do not consider tableaux of non-partition shape to exist.

If $\lambda = (\lambda_1,\dots,\lambda_r,0,0,\dots)$ with $\lambda_r = 0$, then the final row of the determinant is $(0 \dots 0 \, 1)$ and expansion of the determinant across this row allows us to consider the case of $\lambda$ having $r-1$ nonnegative parts.

Thus, considering the theorem for case $\lambda = (\lambda_1,\dots,\lambda_r,0,0,\dots)$, $\lambda_i > 0$, we can assume for the sake of induction that the statement of the theorem holds for any vector $\beta$ in which $\lambda_i - 1 \leq \beta_i \leq \lambda_i$.  This suffices for the induction we will require.

Say $\lambda = (\lambda_1,\dots,\lambda_r,0,0,\dots) \vdash n$, $\lambda_i > 0$.  Suppose $S \subseteq \{ 1, 2, \dots, r \}$.  Let $\chi$ be the membership function $\chi(i) = 1$ if $i \in S$, $\chi(i) = 0$ if $i \not\in S$.  Denote $$\lambda^{\downarrow S } := (\lambda_1 - \chi(1), \lambda_2 - \chi(2), \dots, \lambda_r - \chi(r),0,0,\dots ).$$

Consider any standard Young tableau of shape $\lambda$ having the maximum number $n - \lambda_1$ of descents.  If the box containing $n$ is in the first row, then it may be removed to leave the partition $(\lambda_1 - 1,\lambda_2,\dots)$ which still has the same number of descents and the same major index.

If the box containing $n$ is in a row lower than the first, then it is the bottom of some string of boxes $n, n-1, \dots, n-\vert S \vert$, where $S$ is some set of rows in the tableau containing these boxes, each of which are the bottom of a descent from the previous box in a higher row, terminating after $\vert S \vert$ steps at the last box in the first row.  For example, consider the tableau

$$ \young(1257,368:,4:::,9:::).$$

\noindent Here $S = \{2, 4\}$.

Removing this string of boxes, including the box in the first row, removes $\vert S \vert$ descents and reduces the major index of the tableau by $\vert S \vert n - \binom{\vert S \vert + 1}{2}$.  The remaining tableau has the new maximum number of descents for the remaining shape, and may have any valid collection of rows extended.  In the example tableau, after removal we obtain the tableau

$$ \young(125,36:,4::).$$

This yields the recurrence

$$f_{\lambda,n-\lambda_1} = f_{\lambda^{\downarrow \{ 1 \}},n-\lambda_1} + \sum_{{S \subseteq \{ 2,\dots, r \}} \atop {S \neq \emptyset}} q^{\vert S \vert n - \binom{\vert S \vert + 1}{2}} f_{\lambda^{\downarrow (S \bigcup \{1 \})},n-\lambda_1-\vert S \vert}.$$

We observe that the first term can be interpreted as $S = \emptyset$ without loss of correctness, and so we may simply write

$$f_{\lambda,n-\lambda_1} = \sum_{S \subseteq \{ 2,\dots, r \}} q^{\vert S \vert n - \binom{\vert S \vert + 1}{2}} f_{\lambda^{\downarrow (S \bigcup \{1 \})},n-\lambda_1-\vert S \vert}.$$

By induction, the claim of the theorem becomes

\begin{multline*} q^{\binom{n-\lambda_1+1}{2}} det \left( \left[ { {\lambda_1} \atop {\lambda_i-i+j} } \right]_q q^{\binom{\lambda_i-i+j}{2}} \right)_{2 \leq i,j \leq r} = \\ \sum_{S \subseteq \{ 2, \dots, r \} } q^{\vert S \vert n - \binom{\vert S \vert + 1}{2}} q^{\binom{n-\lambda_1-\vert S \vert + 1}{2}} det \left( \left[ { {\lambda_1 - 1} \atop {\lambda_i - \chi(i) -i+j} } \right]_q q^{\binom{\lambda_i - \chi(i) - i+j}{2}} \right)_{2 \leq i, j \leq r}.
\end{multline*}

Consider terms in which $r \not\in S$, and pair these with terms in which the rows reduced are $S \bigcup \{ r \}$.  Let $\delta_{i,r}$ be the Dirac delta returning 1 if $i=r$ and 0 otherwise.  We have the identity

\begin{multline*}
q^{\vert S \vert n-\binom{\vert S \vert +1}{2}+\binom{n-\lambda_1-\vert S \vert +1}{2}} det \left( \left[ { {\lambda_1-1} \atop {\lambda_i-\chi(i)-i+j} } \right]_q q^{\binom{\lambda_i - \chi(i)-i+j}{2}} \right)_{2 \leq i, j \leq r} \\
+ q^{(\vert S \vert +1)n-\binom{\vert S \vert +2}{2}+\binom{n-\lambda_1-\vert S \vert }{2}} det \left( \left[ { {\lambda_1-1} \atop {\lambda_i-\chi(i)-i+j-\delta_{i,r}} } \right]_q q^{\binom{\lambda_i - \chi(i)-i+j-\delta_{i,r}}{2}} \right)_{2 \leq i, j \leq r} \\
= q^{\vert S \vert n-\binom{\vert S \vert +1}{2}+\binom{n-\lambda_1-\vert S \vert +1}{2}} \left[  det \left( \left[ { {\lambda_1-1} \atop {\lambda_i-\chi(i)-i+j} } \right]_q q^{\binom{\lambda_i - \chi(i)-i+j}{2}} \right)_{2 \leq i, j \leq r} \right. \\
\left. + q^{\lambda_1-1} det \left( \left[ { {\lambda_1-1} \atop {\lambda_i-\chi(i)-i+j-\delta_{i,r}} } \right]_q q^{\binom{\lambda_i - \chi(i)-i+j-\delta_{i,r}}{2}} \right)_{2 \leq i, j \leq r} \right]
\end{multline*}

Recall that a determinant of an $n \times n$ matrix is a sum over permutations of length $n$.  In the two determinants of the latter line, consider corresponding terms that are indexed by the permutation $\sigma = (\sigma_2,\dots,\sigma_r) \in {\mathfrak{S}}_{r-1}$.  Denote $\beta_i = \lambda_i - \chi(i) - i + \sigma_i$.  The terms in the summations corresponding to this permutation, ignoring leading powers of $q$, are then

\begin{multline*}
\prod_{i=2}^r \left[ { {\lambda_1 - 1} \atop {\beta_i}} \right]_q q^{\binom{\beta_i}{2}} + q^{\lambda_1-1} \prod_{i=2}^r \left[ { {\lambda_1 - 1} \atop {\beta_i - \delta_{i,r}}} \right]_q q^{\binom{\beta_i - \delta_{i,r}}{2}} \\
= \left( \prod_{i=2}^r \left[ { {\lambda_1 - 1} \atop {\beta_i}} \right]_q q^{\binom{\beta_i}{2}} \right) q^{\binom{\beta_r}{2}} \left( \left[ { {\lambda_1 - 1} \atop {\beta_r} } \right]_q + q^{\lambda_1 - \beta_r} \left[ { {\lambda_1 - 1} \atop {\beta_r - 1} } \right]_q \right) \\
= \left( \prod_{i=2}^r \left[ { {\lambda_1 - 1} \atop {\beta_i}} \right]_q q^{\binom{\beta_i}{2}} \right) q^{\binom{\beta_r}{2}} \left[ { {\lambda_1} \atop {\beta_r} } \right]_q
\end{multline*}

\noindent by the identity $\left[ {M \atop N} \right]_q = \left[ {{M-1} \atop N} \right]_q + q^{M-N} \left[ { {M-1} \atop {N-1} } \right]_q$.

The claim of the theorem now becomes 

\begin{multline*} q^{\binom{n-\lambda_1+1}{2}} det \left( \left[ { {\lambda_1} \atop {\lambda_i-i+j} } \right]_q q^{\binom{\lambda_i-i+j}{2}} \right)_{2 \leq i,j \leq r} = \\ \sum_{S \subseteq \{ 2, \dots, r-1 \} } q^{\vert S \vert n - \binom{\vert S \vert + 1}{2}} q^{\binom{n-\lambda_1-\vert S \vert + 1}{2}} det \left( \left[ { {\lambda_1 - 1 + \delta_{i,r} } \atop {\lambda_i - \chi(i) -i+j} } \right]_q q^{\binom{\lambda_i - \chi(i) - i+j}{2}} \right)_{2 \leq i, j \leq r}.
\end{multline*}

In other words we have the same sum except that $\vert S \vert$ never contains $r$, and the $r$ row of the determinant has $\lambda_1$ as the upper entry of the $q$-binomial coefficients instead of $\lambda_1-1$.

The same matching and $q$-binomial identity can be employed repeatedly with index $r-1$, $r-2$, etc., up to index 2.  When only $S = \emptyset $ remains, all terms on the right hand side match the corresponding term on the left, and the claim of the theorem is verified. \hfill $\Box$

\phantom{.}

\subsection{Three-rowed tableaux}

We prove several families of formulas for three-rowed tableaux.  In most of these cases not covered earlier, it is apparently \emph{not} the case that the formulas are instances of principal specializations.  Exhaustive computer calculations confirm that, for instance, $f_{(3,3,3),3}$ and $f_{(4,3,3),3}$ are not $q$-power shifts of $$det \left( \left[ { {\lambda_1} \atop {\lambda_i-i+j} } \right]_q q^{\binom{\lambda_i-i+j}{2}} \right)_{2 \leq i,j \leq r}$$ \noindent for any partition $\lambda$ of 3 or 4 parts of size less than 15, and it seems improbable that they should be a specialization involving larger parts.

Kirillov and Reshetikhin's result does still confirm that all of the formulas proven in this subsection are unimodal.  In several cases, however, this is obvious from the construction.

The typical method for a theorem proved in this section is to establish a suitable recurrence and boundary conditions, then induct.  Some of the proofs in this section omit repetitive details.

\phantom{.}

\noindent \textbf{Proof of Theorem \ref{MK2Thm}.}  For a given $n$, $k$, and $i$, suppose for the sake of induction that the formula holds for smaller values of the parameters.  The formula yields 0 for $i \leq 1$, $n=k-1$, or $k=1$, as desired.

For $k=2$, $i=2$ the claim of the formula is that $f_{(n,2,2),2} = q^6 \left[ {n \atop 2} \right]_q$.  Tableaux of such characteristics are uniquely identified by descents occurring at distinct positions $p_1$ and $p_2$ with $2 \leq p_1 \leq p_2 - 2 \leq n$.  Such pairs with $p_1 + p_2 = m+6$ are in correspondence with partitions of $m$ counted by the $q$-binomial coefficient $\left[ {n \atop 2} \right]_q$ and so the theorem holds for the smallest nonzero case.

In a given tableau, observe the position of the final box $n+k+2$.  Denote the distribution of major index over tableaux of shape $\lambda$ with exactly $i$ descents in which the box containing $\vert \lambda \vert$ is in row $j$ by $f_{\lambda,i}^{(j)}$.

We have the following recurrences:

\begin{align*}
f_{(n,k,2),i}^{(1)} &= f_{(n-1,k,2),i} = f_{(n-1,k,2),i}^{(1)} + f_{(n-1,k,2),i}^{(2)} + f_{(n-1,k,2),i}^{(3)} \\
f_{(n,k,2),i}^{(2)} &= q^{n+k+1} f_{(n,k-1,2),i-1}^{(1)} + f_{(n,k-1,2),i}^{(2)} + f_{(n,k-1,2),i}^{(3)} \, . \\
f_{(n,k,2),i}^{(3)} &= q^{n+k+1} f_{(n,k,1),i-1}^{(1)} + q^{n+k+1} f_{(n,k,1),i-1}^{(2)} + f_{(n,k,1),i}^{(3)}
\end{align*}

We further observe that $f_{(n,k,1),i}^{(3)} = q^{n+k} f_{(n,k),i-1}$ and that $$q^{n+k+1} f_{(n,k,1),i-1}^{(1)} + q^{n+k+1} f_{(n,k,1),i-1}^{(2)} = q^{n+k+1} f_{(n,k,1),i-1} - q^{n+k+1} f_{(n,k,1),i-1}^{(3)}.$$

This yields

$$f_{(n,k,2),i}^{(3)} = q^{n+k+1} f_{(n,k,1),i-1} - q^{2n+2k+1} f_{(n,k),i-2} + q^{n+k} f_{(n,k),i-1}.$$

Similarly,

\begin{multline*}f_{(n,k,2),i}^{(2)} = q^{n+k+1} f_{(n,k-1,2),i-1}^{(1)} + f_{(n,k-1,2),i}^{(2)} + f_{(n,k-1,2),i}^{(3)} \\
= q^{n+k+1} f_{(n-1,k-1,2),i-1} + f_{(n,k-1,2),i}^{(2)} + f_{(n,k-1,2),i}^{(3)} \\
= q^{n+k+1} f_{(n-1,k-1,2),i-1} + f_{(n,k-1,2),i} - f_{(n,k-1,2),i}^{(1)} \\
= q^{n+k+1} f_{(n-1,k-1,2),i-1} + f_{(n,k-1,2),i} - f_{(n-1,k-1,2),i} \\
\end{multline*}

Thus

\begin{multline}\label{nk2rec}
f_{(n,k,2),i} = f_{(n,k,2),i}^{(1)} + f_{(n,k,2),i}^{(2)} + f_{(n,k,2),i}^{(3)} \\
= f_{(n-1,k,2),i} + q^{n+k+1} f_{(n-1,k-1,2),i-1} + f_{(n,k-1,2),i} - f_{(n-1,k-1,2),i} \\ +  q^{n+k+1} f_{(n,k,1),i-1} - q^{2n+2k+1} f_{(n,k),i-2} + q^{n+k} f_{(n,k),i-1} .
\end{multline}

Into equation \ref{nk2rec} we now substitute the conjectured formula on the left, and the known formulas for $f_{(n,k,1),i}$ and $f_{(n,k),i}$ as well as the inductively assumed formula for $f_{(n,k,2),i}$ on the right.  We obtain that we wish to verify the identity

\begin{multline*}
q^{k+i^2-3i+6} \frac{(1-q^{n-k+1})(1-q^{k-1})(1-q^n)}{(1-q^{i-1})(1-q)(1-q^2)} \left[ { {n+1} \atop {i-2} } \right]_q \left[ { {k} \atop {i-2} } \right]_q \\
= q^{k+i^2-3i+6} \frac{(1-q^{n-k})(1-q^{k-1})(1-q^{n-1})}{(1-q^{i-1})(1-q)(1-q^2)} \left[ { {n} \atop {i-2} } \right]_q \left[ { {k} \atop {i-2} } \right]_q \\
+ q^{n+2k+i^2-5i+10} \frac{(1-q^{n-k+1})(1-q^{k-2})(1-q^{n-1})}{(1-q^{i-2})(1-q)(1-q^2)} \left[ { {n} \atop {i-3} } \right]_q \left[ { {k-1} \atop {i-3} } \right]_q \\
+ q^{k+i^2-3i+5} \frac{(1-q^{n-k+2})(1-q^{k-2})(1-q^n)}{(1-q^{i-1})(1-q)(1-q^2)} \left[ { {n+1} \atop {i-2} } \right]_q \left[ { {k-1} \atop {i-2} } \right]_q \\
- q^{k+i^2-3i+5} \frac{(1-q^{n-k+1})(1-q^{k-2})(1-q^{n-1})}{(1-q^{i-1})(1-q)(1-q^2)} \left[ { {n} \atop {i-2} } \right]_q \left[ { {k-1} \atop {i-2} } \right]_q \\
+ q^{n+2k+i^2-4i+6} \frac{(1-q^{n-k+1})(1-q^{i-2})}{(1-q^{i-1})(1-q)}\left[{k \atop {i-2}} \right]_q \left[ {{n+1} \atop {i-2}} \right]_q\\
- q^{2n+2k+1+i^2} \left( \left[ { {n} \atop {i-2} } \right]_q \left[ { {k} \atop {i-2} } \right]_q - \left[ { {n+1} \atop {i-2} } \right]_q \left[ { {k-1} \atop {i-2} } \right]_q  \right) \\
+ q^{n+k+i^2} \left( \left[ { {n} \atop {i-1} } \right]_q \left[ { {k} \atop {i-1} } \right]_q - \left[ { {n+1} \atop {i-1} } \right]_q \left[ { {k-1} \atop {i-1} } \right]_q  \right).
\end{multline*}

To verify this identity, multiply through both sides of the equation by

$$(1-q)(1-q^2)(1-q^{i-1})(1-q^{i-2})\frac{(q)_{i-1}(q)_{n-i+3}(q)_{k-i+2}(q)_{i-1}}{q^{k+i^2-5i} (q)_n (q)_{k-1}}.$$

We obtain that we wish to verify the polynomial identity

\begin{multline*}
q^{2i+6}(1-q^{i-2})(1-q^{n-k+1})(1-q^{k-1})(1-q^n)(1-q^{n+1})(1-q^{i-1})(1-q^k)(1-q^{i-1}) \\
= q^{2i+6}(1-q^{i-2})(1-q^{n-k})(1-q^{k-1})(1-q^{n-1})(1-q^{n-i+3})(1-q^{i-1})(1-q^k)(1-q^{i-1}) \\
+ q^{n+k+10}(1-q^{i-1})(1-q^{n-k+1})(1-q^{k-2})(1-q^{n-1})(1-q^{i-2})(1-q^{i-1})(1-q^{i-2})(1-q^{i-1}) \\
+ q^{2i+5}(1-q^{i-2})(1-q^{n-k+2})(1-q^{k-2})(1-q^n)(1-q^{n+1})(1-q^{i-1})(1-q^{k-i+2})(1-q^{i-1}) \\
- q^{2i+5}(1-q^{i-2})(1-q^{n-k+1})(1-q^{k-2})(1-q^{n-1})(1-q^{n-i+3})(1-q^{i-1})(1-q^{k-i+2})(1-q^{i-1}) \\
+ q^{n+k+i+6}(1-q^2)(1-q^{i-2})^2(1-q^{n-k+1})(1-q^{n+1})(1-q^{i-1})(1-q^k)(1-q^{i-1}) \\
- q^{2n+k+1+5i}(1-q^{i-2})(1-q^{i-1})(1-q)(1-q^2)\left((1-q^{n-i+3})(1-q^{i-1})(1-q^k)(1-q^{i-1}) \right. \\ \left. - (1-q^{n+1})(1-q^{i-1})(1-q^{k-i+2})(1-q^{i-1}) \right) \\
+ q^{n+5i}(1-q^{i-2})(1-q^{i-1})(1-q)(1-q^2)\left((1-q^{n-i+2})(1-q^{n-i+3})(1-q^k)(1-q^{k-i+2}) \right. \\ \left. - (1-q^{n+1})(1-q^{n-i+3})(1-q^{k-i+1})(1-q^{k-i+2}) \right) .
\end{multline*}

Expansion and cancellation (with a symbolic algebra package to ease the calculations) verifies the identity, and the theorem is proved. \hfill $\Box$

We have the following results for tableaux of shape $(n,3,3)$.

\begin{theorem}\label{N33}
\begin{align*}
f_{(n,3,3),2} &= q^9 \left[ {{n-1} \atop 2} \right]_q \\
f_{(n,3,3),3} &= q^{11} \left[ { {n-1} \atop {2} } \right]_q \left[ { {n+3} \atop {1} } \right]_q + q^{12} \left[ { {n} \atop {3} } \right]_q \left[ {4 \atop 1} \right]_q \\
f_{(n,3,3),4} &= q^{15} \left[ { {n-1} \atop {2} } \right]_q \left[ { {n+2} \atop {2} } \right]_q + q^{15} \left[ { {n+1} \atop {4} } \right]_q \left[ { {5} \atop {2} } \right]_q \\
f_{(n,3,3),5} &= q^{21} \left[ { {n-1} \atop {2} } \right]_q \left[ { {n+1} \atop {3} } \right]_q + q^{20} \left[ { {n+1} \atop {4} } \right]_q \left[ { {n+3} \atop {1} } \right]_q  \\
f_{(n,3,3),6} &= q^{27} \frac{(1-q^{n-2})(1-q^{n-1})^2 (1-q^n)^2(1-q^{n+1})}{(1-q)(1-q^2)^2(1-q^3)^2 (1-q^4)} \\ 
 &= q^{27} \left[ { {n} \atop {3} } \right]_q \left[ { {n} \atop {3} } \right]_q - q^{28} \left[ { {n} \atop {4} } \right]_q \left[ { {n} \atop {2} } \right]_q
\end{align*}
\end{theorem}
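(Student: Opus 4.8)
The plan is to handle the five formulas through a single recurrence in $n$, reserving the maximal case $i=6$ for a direct appeal to earlier results. First I would observe that for shape $(n,3,3)$ the descent number can range only over $2 \le i \le 6$: at least $2$ because the shape has three rows, and at most $n+6-n=6$, the number of cells lying outside the first row. Hence the five stated formulas are exhaustive. The case $i=6$ requires no new work: it is exactly Corollary \ref{ThreeRowFull} (equivalently Theorem \ref{BigSchur}) with $j=k=3$, which gives $q^{27}\left(\left[{n \atop 3}\right]_q^2 - q\left[{n \atop 4}\right]_q\left[{n \atop 2}\right]_q\right)$; the first (product) form of $f_{(n,3,3),6}$ then follows by a routine manipulation of $q$-binomials.

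For $i \in \{2,3,4,5\}$ I would build a recurrence by tracking the cell containing the largest entry $n+6$. Since the only outer corners of $(n,3,3)$ are $(1,n)$ and $(3,3)$, that entry lies either at the end of the first row or at the bottom-right cell. In the first case deletion leaves a tableau of shape $(n-1,3,3)$ with the same descent set and major index. In the second case I would further split on the location of $n+5$: if $n+5$ sits at $(3,2)$ no descent is lost and deletion yields a tableau of shape $(n,3,2)$ whose largest entry ends the third row; if instead $n+5$ lies at $(2,3)$ or $(1,n)$ a descent at position $n+5$ is destroyed, lowering the descent count by one and the major index by $n+5$. Writing $f^{(j)}$ for the sub-distribution in which the largest cell occupies row $j$, this yields
$$f_{(n,3,3),i} = f_{(n-1,3,3),i} + q^{n+5}\left(f^{(1)}_{(n,3,2),i-1} + f^{(2)}_{(n,3,2),i-1}\right) + f^{(3)}_{(n,3,2),i}.$$
Using $f^{(1)}+f^{(2)} = f - f^{(3)}$ for shape $(n,3,2)$, together with the expression for $f^{(3)}_{(n,3,2),i}$ already extracted in the proof of Theorem \ref{MK2Thm} (with $k=3$), namely $q^{n+4}f_{(n,3,1),i-1} - q^{2n+7}f_{(n,3),i-2} + q^{n+3}f_{(n,3),i-1}$, every term on the right other than $f_{(n-1,3,3),i}$ is known in closed form through Theorems \ref{MK2Thm}, \ref{PrevRes}, and \ref{MainThm}. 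The recurrence thus reduces $f_{(n,3,3),i}$ to strictly smaller first row, and I would induct on $n$.

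To finish I would check the base case $n=3$ (the square $(3,3,3)$) for each $i$, either by direct enumeration or by the conjugate Jacobi--Trudi/FRT specialization, and confirm that the candidate formulas degenerate correctly, e.g.\ $\left[{n-1 \atop 2}\right]_q = 0$ at $n=2$ by the $q$-binomial convention, so that the $f_{(n-1,3,3),i}$ term switches off at the bottom of the range. The remaining labor is to substitute the four candidate closed forms into the recurrence and collapse the resulting $q$-binomial identities using the Pascal-type relation $\left[{M \atop N}\right]_q = \left[{M-1 \atop N}\right]_q + q^{M-N}\left[{M-1 \atop N-1}\right]_q$ and the summation $\sum_{j} q^{j}\left[{m+j \atop m}\right]_q = \left[{n+m+1 \atop m+1}\right]_q$, exactly as in Theorem \ref{MK2Thm}. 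The main obstacle is precisely this verification: unlike the single uniform identity of Theorem \ref{MK2Thm}, here the closed forms genuinely change shape with $i$ (one monomial times a single $q$-binomial for $i=2$, a two-term sum for $i=3,4,5$, and the difference form for $i=6$), so the algebraic check must be carried out separately for each $i$, with particular care where the $i=5$ sum must reconcile with the $i=6$ difference form at the top of the descent range. The descent-tracking case analysis for the corner cell is conceptually routine but must be executed carefully to keep the exponents of $q$ correct.
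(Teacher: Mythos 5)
Your proposal is correct and matches the paper's own proof in all essentials: the case $i=6$ is dispatched by Theorem \ref{BigSchur} (Corollary \ref{ThreeRowFull}), and the remaining cases are proved by induction on $n$ via a recurrence on the position of the largest entry $n+6$ (end of first row giving $(n-1,3,3)$, or the corner $(3,3)$ giving $(n,3,2)$ terms that unroll through $f^{(3)}_{(n,3,2),i}$ into the known formulas for $(n,3,1)$ and $(n,3)$), exactly as the paper does by analogy with Theorem \ref{MK2Thm}. The only cosmetic difference is that the paper settles $i=2$ by a direct combinatorial identification of the tableaux with pairs of descent positions, whereas you fold it into the recurrence, which also works.
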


\begin{proof}

The case $f_{(n,3,3),2}$ is established with an argument similar to the boundary case of the previous theorem; tableaux of these characteristics are uniquely identified by a partition into two parts of size at least 3 and at most $n+3$, with the second part differing from the first by at least 3.

The case $f_{(n,3,3),6}$ is handled by Theorem \ref{BigSchur}.

The remaining cases are handled by induction and recurrence.  We observe that the box containing $n+6$ must be either in the first row, or the third.  When removing a box from the third row we invoke the formulas given in previous theorems for shapes $\lambda = (n,3,2)$, $(n,3,1)$, or $(n,3)$. The argument is otherwise similar to the proof of Theorem \ref{MK2Thm}.

\end{proof}

\noindent \textbf{Remark:} We note that the unimodality of the distributions above is more obvious than for previous formulas; the first case is trivial, the final case was established with Theorem \ref{BigSchur}, and in the intermediate cases the central degrees of both terms are equal from inspection.

\section{Future work}\label{FutureSec}

The present work leaves many lines of investigation open with great possibility of fruitful exploration.

\subsection{General three-rowed tableaux.} The formulas proven above for small cases of three-rowed tableaux suggest that a general formula for these should not be too difficult to obtain.

Once a distribution is conjectured for a specific family of tableau shapes and descents, and if formulae are known for all partition shapes contained within the desired shape, then a relatively direct route to a proof is induction and recurrence using the method of Theorem \ref{BigSchur} followed by verification of a polynomial identity.  This can probably be carried out for general three part partitions, for which one general recurrence is, letting $\lambda = (\lambda_1,\lambda_2,\lambda_3) \vdash n$: 

\begin{multline*}f_{\lambda,i} = f_{(\lambda_1-1,\lambda_2,\lambda_3),i} + q^{n-1}f_{(\lambda_1-1,\lambda_2-1,\lambda_3),i-1} + f_{(\lambda_1,\lambda_2-1,\lambda_3),i}-f_{(\lambda_1-1,\lambda_2-1,\lambda_3),i} \\
+ \sum_{\ell=1}^{\lambda_3} \sum_{j=1}^{\ell} \sum_{{S = \{ s_1,\dots , s_j \}} \atop {S \subseteq \{1,2,\dots , \ell \}}} (-1)^{\vert S \vert -1} q^{\vert S \vert n -  \sum s_k} f_{(\lambda_1,\lambda_2,\lambda_3 -\ell),i-j} .
\end{multline*}

Generally useful terminal points for this recurrence might be $f_{(n,n,k),i}$ and $f_{(n,k,k),i}$, examples of which are the following.

\begin{conjecture} \begin{align*}
f_{(n,n,3),3} &= q^{n+8} \left[ {{n+2} \atop 1} \right]_q \left[ { {n} \atop {3} } \right]_q \left[{2 \atop 1} \right]_q \\
f_{(n,4,4),3} &= q^{14} \left[ {{n-2} \atop 2} \right]_q \left[ {n \atop 1} \right]_q \left[ {6 \atop 1} \right]_q - q^{17} \frac{(1-q^4)(1-q^{n-3})^2 (1-q^{n-2})}{(1-q)^2 (1-q^2)^2}
\end{align*}
\end{conjecture}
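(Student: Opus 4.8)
The plan is to prove both identities by the paper's standard device: remove the maximal box of a standard Young tableau to set up a recurrence, reduce the right-hand side to shapes whose major-index generating functions are already known, substitute the closed forms, and verify the resulting $q$-binomial identity with a symbolic algebra package, exactly as in the proof of Theorem \ref{MK2Thm}. Because $(n,n,3)$ and $(n,4,4)$ are genuine partition shapes, Theorem \ref{KRThm} already supplies unimodality, so the only task is to establish the equalities; the central-degree computation of equation (\ref{cendegree}) serves as a consistency check that each conjectured expression is symmetric about the correct degree. Throughout, I track the row of the largest entry via the auxiliary polynomials $f^{(j)}_{\lambda,i}$ introduced for Theorem \ref{MK2Thm}, since the relevant corner of each shape depends on whether two rows happen to be equal.

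I would dispose of $f_{(n,4,4),3}$ first, as it is nearly self-contained. With $\vert\lambda\vert = n+8$, the maximal entry sits at the corner $(1,n)$ or at $(3,4)$. Removal from row $1$ leaves shape $(n-1,4,4)$ with an unchanged descent set (a top-row entry is never a descent bottom), furnishing the induction term; removal of the descent string ending at $(3,4)$ produces the shapes $(n,4,3)$, $(n,4,2)$, $(n,4,1)$, $(n,4)$ with fewer descents. The last three are covered by Theorems \ref{MK2Thm} and \ref{PrevRes} and the two-row formula, so only $f_{(n,4,3),i}$ is new. But $f_{(n,4,3),i}$ is itself reached by the general three-row recurrence of the Future Work section: lowering $\lambda_2$ from $4$ to $3$ lands on $f_{(n,3,3),i}$, which is Theorem \ref{N33}, together with the known shapes $(n,4,2)$, $(n,4,1)$, $(n,4)$. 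Since we only ever need small descent numbers $i \le 3$, and Theorem \ref{N33} records $f_{(n,3,3),i}$ for exactly those $i$ (with $f_{(n,3,3),i}=0$ for $i<2$), this chain closes once a closed form for $f_{(n,4,3),i}$ is conjectured from data.

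For $f_{(n,n,3),3}$ the equal top rows make $(1,n)$ a non-corner, so the maximum lies at $(2,n)$ or $(3,3)$. Removal from row $2$ produces the off-diagonal shape $(n,n-1,3)$, and iterating drives $\lambda_2$ downward; consequently, to close the recurrence I must establish the entire family $f_{(n,m,3),i}$ for $3 \le m \le n$ by induction on $m$, with base case $f_{(n,3,3),i}$ from Theorem \ref{N33} and the known shapes $(n,m,2)$, $(n,m,1)$, $(n,m)$ supplied at each step. Specializing the resulting formula to $m=n$ and $i=3$ then yields the first identity. In both identities the final step, after clearing denominators, is a routine but bulky manipulation using the summation identities $\sum_{j} q^{j}\left[ {m+j \atop m} \right]_q = \left[ {n+m+1 \atop m+1} \right]_q$ and $\left[ {M \atop N} \right]_q = \left[ {M-1 \atop N} \right]_q + q^{M-N}\left[ {M-1 \atop N-1} \right]_q$, best confirmed symbolically.

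The main obstacle is twofold. First, one must \emph{conjecture} the intermediate closed form for $f_{(n,m,3),i}$, which the paper does not record, in a shape stable enough for the induction to verify; the two-term difference structure of Corollary \ref{ThreeRowFull} and of the $f_{(n,3,3),i}$ formulas is the natural template to fit against the data. Second, the equal-row degeneracies require delicate boundary bookkeeping: when $\lambda_1=\lambda_2$ (first identity) or $\lambda_2=\lambda_3$ (in the intermediate steps) certain summands vanish or coincide, precisely the phenomenon handled for the $n=k$ and $\beta_i=\beta_{i+1}-1$ boundaries in Theorems \ref{MK2Thm} and \ref{BigSchur}, and the conjectured formula must reproduce this vanishing at the boundary for the induction to be valid. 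Once the closed forms are fixed and these boundaries are reconciled, the remaining polynomial identity is mechanical.
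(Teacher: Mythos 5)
First, a point of orientation: the statement you are proving is one of the \emph{conjectures} in the paper's Future Work section (Section \ref{FutureSec}). The paper contains no proof of it; indeed the author explicitly proposes the very strategy you outline --- the corner-removal recurrence of Theorem \ref{MK2Thm} and the general three-row recurrence displayed just above the conjecture --- and then stops short, precisely because the strategy cannot be completed without further input. Your proposal reproduces that program faithfully, but it is a program, not a proof, and the gap in it is exactly the gap that left these formulas as conjectures.

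Concretely: both of your recurrences bottom out in closed forms that you do not supply. For $f_{(n,4,4),3}$ you need $f_{(n,4,3),i}$ for $i=2,3$; for $f_{(n,n,3),3}$ you need the entire two-parameter family $f_{(n,m,3),i}$, $3 \le m \le n$, $i \le 3$, since removing the corner at $(2,n)$ drives $\lambda_2$ downward one step at a time. You defer these to ``conjecture from data in a shape stable enough for the induction to verify,'' but that deferral is the whole difficulty, not a routine preliminary. The paper's own data suggest the obstruction is real: the conjectured $f_{(n,k,3),2}$, $f_{(n,4,3),3}$, and $f_{(n,5,3),3}$ (the conjecture immediately following yours) have visibly different structures --- a single quotient, a quotient plus a $q$-binomial product, and a sum of two quotients respectively --- so there is no evident uniform two-term template in the spirit of Corollary \ref{ThreeRowFull} for the induction hypothesis to take, and the paper remarks that ``even conjecturing an appropriate family becomes a more difficult task.'' A secondary, fixable slip: your appeal to ``removal of the descent string ending at $(3,4)$'' imports the mechanism of Theorem \ref{BigSchur}, which is only valid for tableaux with the \emph{maximum} number of descents; for $i=3$ on $(n,4,4)$ or $(n,n,3)$ the largest entries need not form a forced chain, so one must use the single-box $f^{(j)}_{\lambda,i}$ bookkeeping of Theorem \ref{MK2Thm} throughout. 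Until explicit candidate formulas for the intermediate families are written down and verified, nothing here constitutes a proof of either identity.
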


Although the formulas for three-rowed tableaux proven to date are not apparently instances of Jacobi-Trudi determinants, it is conceivable that they are simple $q$-linear combinations of these; if so, this would be a useful property to establish.

\subsection{General partition shapes; refinement of Stanley's formula.} A second general recurrence can be established which might be extensible to partitions with a greater number of parts.  Suppose $\lambda \vdash n$ and that the first row ends with box $k$. Consider the boxes $k+1$ through $n$ as a skew partition $\mu$.  Let $\lambda / (\mu+1)$ denote $\lambda$ with these boxes $k$ through $n$ missing; suppose $\mu$ to contain $j$ descents.  Then we have the recurrence, summing over all valid $\mu$,

$$f_{\lambda,i} = \sum_{\mu} \sum_{j=0}^{i-1} f_{\lambda / (\mu+1),i-j-1} \cdot q^{(j+1)(n-\vert \mu \vert)} f_{\mu,j}.$$

A challenge to this line of approach is that many candidates exist for potential forms of formulas, since products of $q$-binomial coefficients and their shifts are vastly more populous than any potential basis for the spaces of polynomials being considered.  Numerical calculations can offer many suggestions; an investigator's mathematical intuition must suggest which families persist over all values of the parameters of interest.  For instance, it is entirely possible that a purely $q$-binomial expression for $f_{(n,4,4),3}$ exists.

As example conjectures, the following families appear to be natural expressions for the related partition shapes; they do not appear to be naturally products of $q$-binomial coefficients.  For larger numbers of parameters even conjecturing an appropriate family becomes a more difficult task.

\begin{conjecture}
\begin{align*}
f_{(n,k,3),2} &= q^{k+6} \frac{(1-q^{n-1})(1-q^{n-k+1})(1-q^{k-2})}{(1-q)^2(1-q^2)} \\
 f_{(n,4,3),3} &= q^{12} \frac{(1-q^5)(1-q^{n-3})(1-q^n)^2}{(1-q)^3(1-q^2)} + q^{13} \left[ { {n-2} \atop 2} \right]_q \left[ { {n+4} \atop 1} \right]_q \\
 f_{(n,5,3),3} &= q^{13} \frac{(1-q^5)(1-q^{n-3})(1-q^{n-1})(1-q^{n+2})}{(1-q)^3(1-q^2)} \\
  &+ q^{14} \frac{(1-q^6)(1-q^{n-5})(1-q^{n-1})(1-q^{n+1})}{(1-q)^3(1-q^2)} \\
\end{align*}
\end{conjecture}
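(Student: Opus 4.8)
The plan is to prove the three formulas in the dependency order $f_{(n,k,3),2} \to f_{(n,4,3),3} \to f_{(n,5,3),3}$, in each case following the template used for Theorem \ref{MK2Thm}: establish boundary conditions, specialize the general three-rowed recurrence displayed in the future-work discussion, substitute the known closed forms on the right-hand side, clear denominators, and verify the resulting polynomial identity in $q$ with a symbolic algebra package. The ordering is forced by the recurrence: the $(n,5,3)$ family calls the $(n,4,3)$ families, which in turn call only the $(n,3,3)$ formulas of Theorem \ref{N33} and the two- and one-rowed formulas of Theorems \ref{MK2Thm}, \ref{PrevRes}, and \ref{MainThm}.

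First I would treat $f_{(n,k,3),2}$ by double induction on $n$ and $k$. The key simplification is that any standard Young tableau of three-rowed shape has at least two descents, so $f_{(n,k,3),1}$, $f_{(n,k,2),1}$, and $f_{(n,k,1),1}$, together with their $i=0$ analogues, all vanish. Feeding $\lambda=(n,k,3)$ and $i=2$ into the general recurrence, every summand of the inclusion--exclusion over subsets $S$ collapses except the single-element sets that delete the entire third row, and the recurrence reduces to the clean two-variable relation
\begin{equation*}
f_{(n,k,3),2} = f_{(n-1,k,3),2} + f_{(n,k-1,3),2} - f_{(n-1,k-1,3),2} + (q^{n-1}+q^{n-2}+q^{n-3})\,f_{(n,k),1},
\end{equation*}
where $f_{(n,k),1}$ is supplied by Theorem \ref{PrevRes}. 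The base case $k=3$ is precisely $f_{(n,3,3),2}$ from Theorem \ref{N33}, and the degenerate boundaries ($k=2$, $n=k-1$, or parameters too small to admit two descents) are exactly where the claimed product vanishes. Substituting the conjectured form and clearing the denominator $(1-q)^2(1-q^2)$ leaves a polynomial identity to be checked mechanically.

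For $f_{(n,4,3),3}$ and $f_{(n,5,3),3}$ I would instead induct on $n$ alone, with the second and third row-lengths held fixed; this is exactly why these particular families are accessible while a general $f_{(n,k,3),3}$ is not, since with $\lambda_2,\lambda_3$ constant the recurrence only ever invokes previously established formulas. For $\lambda=(n,4,3)$ the $\lambda_2$-reduction terms are instances of $f_{(n,3,3),\cdot}$ from Theorem \ref{N33}, the $\lambda_3$-reduction terms are instances of $f_{(n,4,2),\cdot}$, $f_{(n,4,1),\cdot}$, and $f_{(n,4),\cdot}$ from Theorems \ref{MK2Thm}, \ref{PrevRes}, and \ref{MainThm}, and the only genuinely recursive term is $f_{(n-1,4,3),3}$. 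Once this family is proved, the recurrence for $\lambda=(n,5,3)$ closes in the same manner, now additionally calling $f_{(n,4,3),2}$ (first step) and $f_{(n,4,3),3}$ (second step). In both cases the boundary conditions consist of the small-$n$ instances, verified directly, together with the vanishing of the formula below the admissible descent range.

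The main obstacle is not conceptual but the scale of the terminal verification, as in Theorem \ref{MK2Thm}: after substituting the closed forms of several three-, two-, and one-rowed families and clearing all denominators, the identity to be confirmed is large and must be handled by computer algebra, with careful bookkeeping of the $q$-power shifts attached to each $\lambda_3$-reduction term. A secondary point demanding care is determining, for each family and each value of $i$, precisely which summands of the general recurrence survive the minimum-descent constraint, since an incorrectly retained or discarded term would silently corrupt the identity. Unimodality of all three families is then immediate from Kirillov and Reshetikhin (Theorem \ref{KRThm}), as these are standard tableaux of partition shape.
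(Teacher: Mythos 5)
You are attempting to prove a statement that the paper itself leaves as an unproven conjecture in its future-work section, so there is no proof of record to compare against; your program (specialize the general three-rowed recurrence, induct, and finish with a symbolic polynomial verification, in the order $(n,k,3)\to(n,4,3)\to(n,5,3)$) is exactly the route the paper suggests. However, your key displayed reduction is wrong, and the error would make your terminal verification fail. The general recurrence quoted from the future-work section is only correct if the inner sum is restricted to subsets $S\subseteq\{1,\dots,\ell\}$ with $\ell\in S$. This is forced by its derivation: unrolling the row-3 analysis as in the proof of Theorem \ref{MK2Thm}, one removes the maximal string of consecutive entries $N,N-1,\dots,N-\ell+1$ ending row $3$ (here $N=|\lambda|$), which destroys exactly \emph{one} descent, at position $N-\ell$; the signed terms with $\vert S\vert\geq 2$ arise from enforcing maximality, and every subset that occurs satisfies $\max S=\ell$. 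Read literally, without that constraint -- which is how you read it -- the recurrence is false. Consequently, for $i=2$ the surviving contribution of the third-row sum is the single term $q^{n+k}f_{(n,k),1}$ (the set $S=\{3\}$ with $\ell=3$), not $(q^{N-1}+q^{N-2}+q^{N-3})f_{(n,k),1}$. (Note also that your $n$ in that expression conflates $\vert\lambda\vert$, the meaning of $n$ in the paper's recurrence, with the first part, its meaning everywhere else in your equation.)

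The error is not cosmetic. Take $(n,k)=(4,4)$: the shape $(4,4,3)$ has exactly three standard Young tableaux with two descents, with descent sets $\{3,7\}$, $\{4,7\}$, $\{4,8\}$, so $f_{(4,4,3),2}=q^{10}+q^{11}+q^{12}$, agreeing with the conjectured product $q^{10}(1+q+q^2)$. Your recurrence, using $f_{(3,4,3),2}=0$, $f_{(4,3,3),2}=q^9(1+q+q^2)$, $f_{(3,3,3),2}=q^9$ from Theorem \ref{N33}, and $f_{(4,4),1}=q^4$, instead yields
\begin{equation*}
q^9(1+q+q^2)-q^9+(q^{10}+q^9+q^8)\,q^4=q^{10}+q^{11}+q^{12}+q^{13}+q^{14},
\end{equation*}
so the identity you intend to hand to the computer algebra system is simply false. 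With the corrected term $q^{n+k}f_{(n,k),1}=q^8\cdot q^4=q^{12}$ the recurrence checks, and indeed the corrected identity
\begin{equation*}
f_{(n,k,3),2}=f_{(n-1,k,3),2}+f_{(n,k-1,3),2}-f_{(n-1,k-1,3),2}+q^{n+k}\,\frac{q^{k}(1-q^{n-k+1})}{1-q}
\end{equation*}
holds for the conjectured closed form by a short algebraic computation. Two further repairs are then needed to make your plan a proof. First, since the paper never proves the general recurrence (and, as stated, it is not even correct), your argument must include its derivation, generalizing the $f^{(1)},f^{(2)},f^{(3)}$ decomposition in the proof of Theorem \ref{MK2Thm}. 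Second, the same $\ell\in S$ correction propagates to your $i=3$ steps: for $(n,4,3)$ the surviving third-row terms are $q^{N-1}f_{(n,4,2),2}+q^{N-2}f_{(n,4,1),2}+q^{N-3}f_{(n,4),2}-(q^{2N-4}+q^{2N-5})f_{(n,4),1}$ with $N=n+7$, not sums over all singletons and pairs, and an analogous list holds for $(n,5,3)$.
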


Recall the Stanley $q$-analogue of the Frame-Robinson-Thrall formula, equation \ref{StanFRT}, which gives $\sum_{\tau \in SYT(\lambda)} q^{maj(\tau)}$.  An ambitious goal for this line of investigation would be a short formula for the complete refinement $$\sum_{{\tau \in SYT(\lambda)} \atop {des(\tau) = i}} q^{maj(\tau)}.$$

If more formulas for the smaller cases could be established, a sufficient library of these could lead investigators to a correct form for such a refinement which could then be proved by very general recurrence, or other means.  The skew-tableaux approach has potential, although the complexity of the Naruse hooklength formula for the number of tableaux of skew shape (\cite{Naru}, see \cite{MPP} for thorough explication and context) suggests obstacles.

\subsection{Extension of the Kirillov/Reshetikhin unimodality result.} As the formulas given in this paper for skew shapes are not covered by the result of Kirillov and Reshetikhin on unimodality, and several formulas found are not instances of the Jacobi-Trudi identities, it is foreseeable that it would be useful to extend Kirillov and Reshetikhin's result to skew shapes.

The unimodality of the distribution of the major index for some skew tableaux, as proven in Theorem \ref{SkewMaj}, suggests that Kirillov and Reshetikhin's result on unimodality can be extended.  Although $f_{\lambda \setminus \mu,i}$ is not always symmetric when $\lambda$ has more than 3 parts, calculations do suggest the conjecture

\begin{conjecture} The polynomials $f_{\lambda \setminus \mu,i}$ are unimodal.
\end{conjecture}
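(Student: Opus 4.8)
The plan is to reduce the skew case entirely to the straight-shape case already settled by Kirillov and Reshetikhin, using the quasisymmetric-function machinery. The essential observation is that $maj(\tau)$ depends only on $Des(\tau)$, and that the descent sets of skew tableaux decompose along the Littlewood--Richardson rule. Recall Gessel's fundamental quasisymmetric expansion (see \cite[Thm.~7.19.7]{EC2}), which holds for any skew shape, together with the straight-shape special case and the Littlewood--Richardson expansion:
$$s_{\lambda \setminus \mu} = \sum_{\tau \in SYT(\lambda \setminus \mu)} F_{Des(\tau)}, \qquad s_{\nu} = \sum_{\tau \in SYT(\nu)} F_{Des(\tau)}, \qquad s_{\lambda \setminus \mu} = \sum_{\nu} c^{\lambda}_{\mu \nu}\, s_\nu,$$
where $Des$ is exactly the descent set used in this paper. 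Since the $F_D$ are linearly independent, comparing coefficients gives, for every $D \subseteq \{1,\dots,n-1\}$ with $n = \vert \lambda \setminus \mu \vert$, the identity of descent-set multiplicities $\#\{\tau \in SYT(\lambda \setminus \mu) : Des(\tau) = D\} = \sum_\nu c^{\lambda}_{\mu \nu}\, \#\{\tau \in SYT(\nu) : Des(\tau) = D\}.$

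First I would record the consequence for generating functions. Weighting each tableau by $q^{maj(\tau)}$ and restricting to $\vert Des(\tau) \vert = i$ gives at once the decomposition
$$f_{\lambda \setminus \mu, i}(q) = \sum_{\nu \vdash n} c^{\lambda}_{\mu \nu}\, f_{\nu, i}(q),$$
a \emph{nonnegative integer} combination of straight-shape polynomials. Next I would invoke Kirillov and Reshetikhin: by Theorem \ref{KRThm} each $f_{\nu,i}$ is symmetric and unimodal, and by the darga computation underlying equation (\ref{cendegree}), $f_{\nu,i}$ has darga $\vert \nu \vert\, i$ and hence central degree $\vert \nu \vert\, i / 2$. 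Because every $\nu$ occurring above has $\vert \nu \vert = n$ and $i$ is fixed, all summands are symmetric and unimodal with the \emph{same} central degree $ni/2$ -- precisely the cocentricity already exploited in the proof of Conjecture \ref{SagConj}. Applying the elementary fact that a nonnegative combination of cocentric symmetric unimodal polynomials is again symmetric and unimodal then finishes the argument.

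The step I would scrutinize most, and where the only genuine difficulty lies, is the cocentricity claim: that every straight-shape piece $f_{\nu,i}$ with $\vert \nu \vert = n$ is centered at $ni/2$. This is exactly what equation (\ref{cendegree}) supplies, since that darga is independent of the shape and depends only on $\vert \lambda \vert$ and the descent count. I note that the argument as sketched proves the stronger statement that $f_{\lambda \setminus \mu, i}$ is \emph{symmetric}, which appears to be in tension with the parenthetical remark preceding the conjecture; reconciling this is the one obstacle. Either the cocentricity holds in full generality -- in which case the conjecture follows and these polynomials are in fact symmetric -- or some straight-shape $f_{\nu,i}$ fails to be centered at $ni/2$, which would contradict the symmetry of $f^{i}_\nu$ asserted in the Kirillov--Reshetikhin discussion. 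I would therefore begin by checking the purported non-symmetric examples against this decomposition; the small cases I can verify, such as $(2,2,1,1)\setminus(1,1)$ splitting as $s_{(2,1,1)}+s_{(2,2)}+s_{(1^4)}$ with matching descent-refined distributions, decompose exactly this way and are symmetric, which strongly suggests the quasisymmetric route settles the conjecture outright.
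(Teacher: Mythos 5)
First, a point of orientation: the paper contains no proof of this statement. It is the closing conjecture of Section \ref{FutureSec}, explicitly left open (the author even entertains the possibility of a counterexample). So there is no proof of record to compare yours against; the question is whether your argument stands on its own, and after checking it I believe it does. The decomposition $f_{\lambda \setminus \mu, i}(q) = \sum_{\nu \vdash n} c^{\lambda}_{\mu\nu} f_{\nu,i}(q)$ is forced exactly as you say, by Gessel's expansion, the Littlewood--Richardson rule, and linear independence of the $F_D$ (equivalently: jeu de taquin rectification preserves descent sets and sends precisely $c^{\lambda}_{\mu\nu}$ skew tableaux to each element of $SYT(\nu)$); the descent convention of Section \ref{DefSec} agrees with the one in those theorems. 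Cocentricity of the straight-shape pieces at $ni/2$ is the paper's own equation (\ref{cendegree}) combined with $K^{k}_{\lambda,1^{n}}(q)=q^{\binom{n}{2}}f^{k}_{\lambda}(q^{-1})$, i.e., exactly the fact used to prove Conjecture \ref{SagConj}, and the cocentric-sum lemma is stated in Section \ref{DefSec}. Note that your argument is the natural generalization of the paper's proof of Theorem \ref{SkewMaj}: there the author decomposes the two-row skew polynomial into straight-shape polynomials $f_{(n-j+s,k-s),i}$ by an ad hoc telescoping $q$-binomial identity and then invokes cocentricity. Your Littlewood--Richardson decomposition replaces that telescoping identity, works for every skew shape at once, and requires no closed formula for the skew polynomial -- which is what the paper's method cannot avoid.

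The one point needing a firmer hand is the one you flag: your argument proves that $f_{\lambda \setminus \mu,i}$ is always \emph{symmetric}, contradicting the parenthetical remark preceding the conjecture that symmetry fails for some $\lambda$ with more than three parts. You present this as an unresolved dichotomy, but your own proof resolves it: every ingredient is a theorem (Kirillov--Reshetikhin symmetry and the darga computation are asserted by the paper itself; the descent-multiset identity is forced by linear independence of the fundamental quasisymmetric functions), so under the paper's stated definitions no non-symmetric example can exist. Direct enumeration of four-part cases such as $(2,2,1,1)\setminus(1)$ and $(2,2,1,1)\setminus(1,1)$ confirms both the decomposition and the symmetry. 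The paper's remark must therefore rest on a computational slip or on some nonstandard descent convention for skew shapes. I would advise only one revision: state the symmetry and unimodality conclusion affirmatively rather than proposing to ``begin by checking'' the purported counterexamples, since the hedge concedes doubt on exactly the point your argument settles.
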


Even if a counterexample is found, understanding the conditions under which the conjecture is true or false would be of significant interest.

\section{Acknowledgements}

Portions of this work were presented at the conference Combinatory Analysis 2018 at Pennsylvania State University, celebrating the 80th birthday of George Andrews.  The author thanks the organizers of that conference for the invitation to speak, and fellow attendee Dennis Stanton of the University of Minnesota for alerting him to the Jacobi-Trudi interpretations of his results, which led to Theorem \ref{BigSchur} and its corollary.


\begin{thebibliography}{9}

\bibitem{AdinRoich} R. M. Adin and Y. Roichman, Enumeration of standard Young tableaux. arXiv preprint: https://arxiv.org/pdf/1408.4497.pdf

\bibitem{Andrews} G. E. Andrews. On the difference of successive Gaussian polynomials. J. Stat. Plan. Inf. \textbf{34} (1993), 19-22

\bibitem{Barnabei} M. Barnabei, F. Bonetti, S. Elizalde, and M. Silimbani. Descent sets on 321-avoiding involutions and hook decompositions of partitions. J. Combin. Theory Ser. A
\textbf{128}, 132-148 (2014)

\bibitem{Barn2} M. Barnabei, F. Bonetti, S. Elizalde, and M. Silimbani.  Two descent statistics over 321-avoiding centrosymmetric involutions. Elec. J. Comb. \textbf{23} 1 (2016) \#P1.35

\bibitem{Butler} Butler, L. M. The $q$-log-concavity of $q$-binomial coefficients, J. Combin. Theory Ser. A \textbf{54}, 54-63 (1990)

\bibitem{EC2} Stanley, R. P. Enumerative Combinatorics, Vol. 2.  Volume 62 of Cambridge Studies in Advanced Mathematics. Cambridge University Press, Cambridge, 1999.

\bibitem{DFZ} R. R. X. Du, X. Fan, and Y. Zhao, Enumeration on row-increasing tableaux of shape $2 \times n$, https://arxiv.org/pdf/1803.01590.pdf

\bibitem{GoodOHaraStanton} F. M. Goodman, K. M. O'Hara, and D. Stanton. A unimodality identity for a Schur function. J. Combin. Theory, Ser. A \textbf{60}, 143-146 (1992).

\bibitem{Keith1} W. J. Keith. Major index over descent for pattern-avoiding permutations. Integers \textbf{18} (2018), http://math.colgate.edu/\~{}integers/vol18.html, paper \#A55.

\bibitem{KR} Kirillov, A. N. and Reshetikhin, N. Yu., The Bethe ansatz and combinatorics of the Young tableaux (Russian), Zap. Nauch. Semin. LOMI, 155 (1986), 65-115, translation in Journal of Soviet Math., 41 (1988), 925-955.

\bibitem{Kratt} C. Krattenthaler and S. G. Mohanty.  On lattice path counting by major index and descents. Europ. J. Combinatorics (1993) \textbf{14}, 43-51

\bibitem{Mac} I. G. MacDonald. Symmetric Functions and Hall Polynomials, 2nd ed.  Oxford Mathematical Monographs, Oxford University Press, 2008

\bibitem{MPP} A. H. Morales, I. Pak, and G. Panova. Hook formulas for skew shapes I. $q$-analogues and bijections.  J. Combin. Theory Ser. A \textbf{154} (2018), 350-405

\bibitem{Naru} H. Naruse, Schubert calculus and hook formula, talk slides at 73rd S\'{e}m. Lothar. Combin., Strobl, Austria, 2014; available at tinyurl.com/z6paqzu

\bibitem{OHara} K. M. OÕHara, Unimodality of Gaussian coefficients: A constructive approach, J. Combin. Theory Ser. A \textbf{53} (1990), 29-52

\bibitem{ReinerStanton} V. Reiner and D. Stanton.  Unimodality of differences of specialized Schur functions. J. Alg. Combin. \textbf{7} (1998), 91-107

\bibitem{SaganEtAl} S. E. Cheng, S. Elizalde, A. Kasraoui, and B. E. Sagan, Inversion polynomials for 321-avoiding permutations, Discrete Math. 313, Issue 22, 28 November 2013, 2552-2565.

\bibitem{Zeil} D. Zeilberger, O'Hara's constructive proof of the unimodality of the Gaussian polynomials, Amer. Math. Monthly, Vol. 96, No. 7, (Aug. - Sep., 1989), pp. 590-602

\end{thebibliography}
\end{document}